\numberwithin{equation}{section}
\theoremstyle{plain}
\newtheorem{theorem}[equation]{Theorem}
\newtheorem{conjecture}[equation]{Conjecture}
\newtheorem{lemma}[equation]{Lemma}
\newtheorem{corollary}[equation]{Corollary}
\newtheorem{proposition}[equation]{Proposition}
\theoremstyle{definition}
\newtheorem{remark}[equation]{Remark}
\newtheorem{nonsec}[equation]{}
\theoremstyle{remark}
\newcommand{\R}{\mathbb{R}}
\newcommand{\B}{\mathbb{B}}
\newcommand{\uhp}{\mathbb{H}}
\newcounter{alphabet}
\newcounter{minutes}\setcounter{minutes}{\time}
\newcounter{hours}\setcounter{hours}{\time}
\begin{document}
\bibliographystyle{amsplain}
\title{Intrinsic metrics defined with arithmetic and logarithmic mean values}

\def\thefootnote{}
\footnotetext{
\texttt{\tiny File:~\jobname .tex,
          printed: \number\year-\number\month-\number\day,
          \thehours.\ifnum\theminutes<10{0}\fi\theminutes}
}
\makeatletter\def\thefootnote{\@arabic\c@footnote}\makeatother

\author[O. Rainio]{Oona Rainio}
\author[R. Kargar]{Rahim Kargar}

\keywords{Hyperbolic geometry, hyperbolic metric, hyperbolic type metrics, intrinsic metrics}
\subjclass[2010]{Primary 51M10; Secondary 51M16}
\begin{abstract}
We introduce several new functions that measure the distance between two points $x$ and $y$ in a domain $G\subsetneq\mathbb{R}^n$ by using the arithmetic or the logarithmic mean of the Euclidean distances from the points $x$ and $y$ to the boundary of $G$. We study in which domains these functions are metrics and find sharp inequalities between them and the hyperbolic metric. We also present one result about their distortion under quasiregular mappings. 
\end{abstract}
\maketitle

\noindent
Oona Rainio$^1$, email: \texttt{ormrai@utu.fi}, ORCID: 0000-0002-7775-7656\newline
Rahim Kargar$^1$, email: \texttt{rakarg@utu.fi}, ORCID: 0000-0003-1029-5386\newline
1: University of Turku, FI-20014 Turku, Finland\\
\textbf{Funding.} O.R.'s research was funded by Finnish Culture Foundation and Magnus Ehrnrooth Foundation and R.K.'s research by Väisälä Foundation.\\
%\textbf{Acknowledgements.} The authors are thankful for the referee for their corrections.\\
\textbf{Data availability statement.} Not applicable, no new data was generated.\\
\textbf{Conflict of interest statement.} There is no conflict of interest.

\section{Introduction}

The hyperbolic metric is crucial in the study of geometric function theory. Due to its conformal invariance, hyperbolic distances can be easily found in any simply connected planar domains but they are not defined in general in higher dimensions. Because of this, several other generalizations have been introduced for the hyperbolic metric. These intrinsic or hyperbolic type metrics do not share all the properties of the hyperbolic metric, but they have simple definitions regardless of the dimension and, like the hyperbolic metric, they measure the distance between two points in a domain by taking into account how far the points are from each other and what is their position with respect to the boundary of the domain. 

Let $G\subsetneq\R^n$ be a domain. For a point $x\in G$, denote the Euclidean distance to the boundary $\partial G$ by $d_G(x)=\inf_{z\in\partial G}|x-z|$. Let $M$ be a function of the form $M:(0,\infty)\times(0,\infty)\to(0,\infty)$. Now, for a constant $c>0$, define the functions $d^c_G,\widehat{d}^c_G:G\times G\to[0,\infty)$, and $\widetilde{d}^c_G:G\times G\to[0,1)$, as follows: 
\begin{align*}
d^c_G(x,y)&=\frac{|x-y|}{cM(d_G(x),d_G(y)},\\
\widehat{d}^c_G(x,y)&=\log(1+d^c_G(x,y))=\log\left(1+\frac{|x-y|}{cM(d_G(x),d_G(y))}\right),\\
\widetilde{d}^c_G(x,y)&={\rm th}\frac{\widehat{d}^c_2(x,y)}{2}=\frac{|x-y|}{|x-y|+2cM(d_G(x),d_G(y))}.
\end{align*}

There are several intrinsic or hyperbolic type metrics defined as the functions $d^c_G$, $\widehat{d}^c_G$, and $\widetilde{d}^c_G$ for some function $M$. For instance, if $M(x,y)=\min\{x,y\}$, $\widehat{d}^1_G$ is the distance ratio metric introduced by Gehring and Osgood \cite{GO79} and $\widetilde{d}^1_G$ is the $j^*$-metric introduced by Hariri, Vuorinen, and Zhang \cite{hvz}. If $M(x,y)=\sqrt{xy}$, then $\widehat{d}^{1/c}_G$ is the metric $h_{G,c}$ introduced by Dovgoshey, Hariri, and Vuorinen \cite{d16}, assuming $h_{G,c}$ is defined by using the Euclidean distances. If $M(x,y)=(x+y)/2$, $\widetilde{d}^1_G$ is the $t$-metric introduced by Rainio and Vuorinen \cite{inm}. If $M(x,y)=\max\{x,y\}$, then $d^1_G$ is the $\tilde{c}$-metric introduced by Song and Wang \cite{s23}.

The functions $d^c_G$, $\widehat{d}^c_G$, and $\widetilde{d}^c_G$ are not metrics with certain functions $M$ in some domains $G\subsetneq\R^n$, but the next result follows from a well-known result related to metrics. 

\begin{theorem}\label{thm_1}
Let $d^c_G$, $\widehat{d}^c_G$, and $\widetilde{d}^c_G$ be defined with the same function $M$. If $\widehat{d}^c_G$ is a metric in a domain $G\subsetneq\R^n$ for some values of $c$, then so is $\widetilde{d}^c_G$ for the same choices of $G$ and $c$. Similarly, if $d^c_G$ is a metric, then so are both $\widehat{d}^c_G$ and $\widetilde{d}^c_G$.   
\end{theorem}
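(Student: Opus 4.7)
The plan is to exploit the standard fact that composing a metric with a well-behaved concave function yields another metric, together with the two identities
\[
\widehat{d}^c_G = f \circ d^c_G \quad \text{with } f(t)=\log(1+t), \qquad \widetilde{d}^c_G = g \circ \widehat{d}^c_G \quad \text{with } g(t)=\tanh(t/2),
\]
both of which are visible directly from the definitions.

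The key lemma I would record first is: if $(X,\rho)$ is a metric space and $\varphi\colon[0,\infty)\to[0,\infty)$ is non-decreasing, subadditive, and satisfies $\varphi(t)=0 \iff t=0$, then $\varphi\circ\rho$ is a metric on $X$. Non-negativity, symmetry, and the identity of indiscernibles transfer immediately from $\rho$ to $\varphi\circ\rho$, and the triangle inequality follows from the chain
\[
\varphi(\rho(x,z)) \le \varphi(\rho(x,y)+\rho(y,z)) \le \varphi(\rho(x,y)) + \varphi(\rho(y,z)),
\]
using monotonicity for the first inequality and subadditivity for the second. I would then remark that any concave function $\varphi\colon[0,\infty)\to[0,\infty)$ with $\varphi(0)=0$ is automatically subadditive, since writing $a = \tfrac{a}{a+b}(a+b) + \tfrac{b}{a+b}\cdot 0$ and using concavity gives $\varphi(a) \ge \tfrac{a}{a+b}\varphi(a+b)$, with the analogous bound for $\varphi(b)$, and adding these yields $\varphi(a)+\varphi(b)\ge\varphi(a+b)$.

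With the lemma in hand, both parts of the theorem reduce to checking that the two functions $f(t)=\log(1+t)$ and $g(t)=\tanh(t/2)$ satisfy the hypotheses. Each is continuous, strictly increasing, vanishes only at $0$, and is strictly concave on $[0,\infty)$ (as $f''(t)=-1/(1+t)^2<0$ and $g''(t) = -\tfrac{1}{2}\tanh(t/2)\operatorname{sech}^2(t/2)<0$ for $t>0$), hence subadditive. Applying the lemma with $\varphi=f$ to $\rho=d^c_G$ yields that $\widehat{d}^c_G$ is a metric whenever $d^c_G$ is; applying it with $\varphi=g$ to $\rho=\widehat{d}^c_G$ yields that $\widetilde{d}^c_G$ is a metric whenever $\widehat{d}^c_G$ is. Chaining the two implications, or equivalently composing $g\circ f$ (which works out to $t\mapsto t/(t+2)$, matching the closed form of $\widetilde{d}^c_G$ in terms of $d^c_G$), completes the statement that $d^c_G$ being a metric implies both $\widehat{d}^c_G$ and $\widetilde{d}^c_G$ are as well.

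I do not anticipate any genuine obstacle: everything hinges on the elementary concavity of $\log(1+t)$ and $\tanh(t/2)$, and the observation that the definitions of the three functions are related by exactly the compositions above. The only care needed is to state the lemma with precisely the right hypotheses (in particular, that $\varphi$ vanishes only at $0$) so that the identity of indiscernibles is preserved.
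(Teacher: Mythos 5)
Your proposal is correct and takes essentially the same route as the paper: both decompose $\widehat{d}^c_G=f\circ d^c_G$ and $\widetilde{d}^c_G=g\circ\widehat{d}^c_G$ with $f(t)=\log(1+t)$ and $g(t)=\tanh(t/2)$ and then invoke a metric-preserving-function lemma. The only difference is that the paper simply cites Proposition~\ref{prop_dfunc} (increasing, $f(0)=0$, $f(t)/t$ decreasing), whereas you prove the lemma from scratch via monotonicity plus subadditivity, deducing subadditivity from concavity --- an essentially equivalent criterion, so your argument is just a self-contained version of the same proof.
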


In this paper, we focus on studying the metrics defined by using the arithmetic mean and its modification as well as the logarithmic mean. In other words, we consider the functions $d^c_G$, $\widehat{d}^c_G$, and $\widetilde{d}^c_G$ defined with
\begin{align*}
M(x,y)=\frac{x+y}{2},\,
M(x,y)=\left(\frac{x^d+y^d}{2}\right)^{1/d},\text{ or }
M(x,y)=
\begin{cases}
\dfrac{x-y}{\log x-\log y},&\text{ if }x\neq y,\\
x,&\text{ if }x=y,
\end{cases}
\end{align*}
where $d>0$ is a constant. We study in which domains the functions $d^c_G$, $\widehat{d}^c_G$, and $\widetilde{d}^c_G$ are metrics and also find several sharp inequalities between $\widetilde{d}^c_G$ and the hyperbolic metric.  

The structure of this paper is as follows. In Section 3, we concentrate on the functions defined with the arithmetic mean and its modification. In Section 4, we consider the case of the logarithmic mean. In Section 5, we find several inequalities between the functions introduced in Sections 3 and 4 and the hyperbolic metric. At the end of this paper, we also present one modification of the Schwarz lemma to describe the distortion of the distances in these new functions under quasiregular mappings.

%%%%%%%%%%%%%%%%%%%%%%%%%%%%%%
%%%%%%%%%%%%%%%%%%%%%%%%%%%%%%
%%%%%%%%%%%%%%%%%%%%%%%%%%%%%%
%%%%%%%%%%%%%%%%%%%%%%%%%%%%%%

\section{Preliminaries}

%%%%%%%%%%%%%%%%%%%%%%%%%%%%%%
%%%%%%%%%%%%%%%%%%%%%%%%%%%%%%
%%%%%%%%%%%%%%%%%%%%%%%%%%%%%%
%%%%%%%%%%%%%%%%%%%%%%%%%%%%%%
First, we will introduce our notations and define other metrics needed. Let $d_G(x)=\inf_{z\in\partial G}|x-z|$ as in Introduction. Denote the unit vectors of the real space $\R^n$ by $e_1,...,e_n$. For two distinct points $x,y\in\R^n$, denote the Euclidean line segment between these two points by $[x,y]$. 

Denote the upper half-space by $\uhp^n=\{x=(x_1,...,x_n)\in\R^n\,:\,x_n>0\}$ and the unit ball by $\B^n=\{x\in\R^n\,:\,|x|<1\}$. Define the hyperbolic metric as \cite[(4.8), p. 52 \& (4.14), p. 55]{hkv} as
\begin{align*}
\text{ch}\rho_{\uhp^n}(x,y)&=1+\frac{|x-y|^2}{2d_{\uhp^n}(x)d_{\uhp^n}(y)},\quad x,y\in\uhp^n,\\
\text{sh}^2\frac{\rho_{\B^n}(x,y)}{2}&=\frac{|x-y|^2}{(1-|x|^2)(1-|y|^2)},\quad x,y\in\B^n.
\end{align*}
If $n=2$, these formulas can be simplified to
\begin{align*}
\text{th}\frac{\rho_{\uhp^2}(x,y)}{2}=\left|\frac{x-y}{x-\overline{y}}\right|,\quad
\text{th}\frac{\rho_{\B^2}(x,y)}{2}=\left|\frac{x-y}{1-x\overline{y}}\right|,
\end{align*}
where $\overline{y}$ is the complex conjugate of $y$.

The triangular ratio metric, originally introduced by P. H\"ast\"o in 2002 \cite{h} and recently studied in \cite{d23,sch,fss,sinb}, is defined as $s_G:G\times G\to[0,1],$ \cite[(1.1), p. 683]{chkv} 
\begin{align*}
s_G(x,y)=\frac{|x-y|}{\inf_{z\in\partial G}(|x-z|+|z-y|)}. 
\end{align*}

\begin{proposition}\label{prop_dfunc}\cite[Ex. 5.24(1)-(2), p. 80]{hkv}
Let $d$ be a metric in the metric space $X$. Suppose that $f:[0,\infty)\to[0,\infty)$ is an increasing function with $f(0)=0$ so that $f(k)/k$ is decreasing on $(0,\infty)$. Then the function $d'=f\circ d$ is a metric in $X$, too.    
\end{proposition}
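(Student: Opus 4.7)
The plan is to verify the three axioms of a metric for $d'=f\circ d$, where non-negativity and symmetry are immediate and the identity of indiscernibles follows from $f(0)=0$ together with the fact that an increasing $f$ with $f(0)=0$ and $f(k)/k$ decreasing must satisfy $f(t)>0$ for $t>0$ (otherwise $f(k)/k$ would jump up from $0$ at some later point). So the entire content lies in the triangle inequality, which I would reduce to a subadditivity statement for $f$.

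The key lemma I would isolate and prove first is: \emph{if $f:[0,\infty)\to[0,\infty)$ is increasing with $f(0)=0$ and $f(k)/k$ is decreasing on $(0,\infty)$, then $f$ is subadditive}, i.e.\ $f(a+b)\le f(a)+f(b)$ for all $a,b>0$. The proof is a two-line computation: since $f(a+b)/(a+b)\le f(a)/a$ and $f(a+b)/(a+b)\le f(b)/b$, multiplying the first by $a$, the second by $b$, and adding gives
\begin{equation*}
(a+b)\,f(a+b)\le (a+b)\bigl(f(a)+f(b)\bigr),
\end{equation*}
so $f(a+b)\le f(a)+f(b)$.

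With this in hand, the triangle inequality for $d'$ follows in one line from the monotonicity of $f$ and the triangle inequality for $d$: for any $x,y,z\in X$,
\begin{equation*}
d'(x,z)=f(d(x,z))\le f\bigl(d(x,y)+d(y,z)\bigr)\le f(d(x,y))+f(d(y,z))=d'(x,y)+d'(y,z).
\end{equation*}

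The main obstacle is really only the subadditivity step, but as shown it is painless once one thinks to multiply through by $a$ and $b$ and add. A minor point of care is ensuring $d'(x,y)=0\Rightarrow x=y$: because $f(k)/k$ is decreasing and positive on $(0,\infty)$ (it takes positive values since $f$ is increasing and non-identically zero on any interval $(0,\varepsilon)$, again using that $f(k)/k$ decreasing forbids $f$ from being constant on a right-neighborhood of $0$), one has $f(t)>0$ for all $t>0$, so $d'(x,y)=0$ forces $d(x,y)=0$ and hence $x=y$.
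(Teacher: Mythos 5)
Your proof is correct, and it is the standard argument for this fact: the paper itself offers no proof, citing the result as an exercise from \cite{hkv}, and your reduction to the subadditivity of $f$ (obtained by multiplying $f(a+b)/(a+b)\le f(a)/a$ by $a$, $f(a+b)/(a+b)\le f(b)/b$ by $b$, and adding) combined with monotonicity is exactly how that exercise is meant to be solved. The only point worth tightening is the definiteness step: your parenthetical rules out $f$ vanishing at some $t>0$ and becoming positive later, but not $f\equiv 0$, for which $d'$ fails to be a metric; so ``increasing'' must be read as strictly increasing (or one must assume $f\not\equiv 0$), and then $f(t)\ge f(t)-f(0)>0$ for $t>0$ immediately, as you essentially observe.
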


\begin{nonsec}{\bf Proof of Theorem \ref{thm_1}.}
Theorem \ref{thm_1} follows directly from Proposition \ref{prop_dfunc} as the functions $f_0,f_1:[0,\infty)\to[0,\infty)$, $f_0(k)=\log(1+k)$ and $f_1(k)={\rm th}(k/2)$ fulfill the conditions of Proposition \ref{prop_dfunc}.   
\end{nonsec}

\section{Arithmetic mean and its modification}

%%%%%%%%%%%%%%%%%%%%%%%%%%%%%%
%%%%%%%%%%%%%%%%%%%%%%%%%%%%%%
%%%%%%%%%%%%%%%%%%%%%%%%%%%%%%
%%%%%%%%%%%%%%%%%%%%%%%%%%%%%%

For a domain $G\subsetneq\R^n$ and a constant $0<c\leq1$, define the functions $\phi^c_G,\widehat{\phi}^c_G:G\times G\to[0,\infty)$, $\widetilde{\phi}^c_G:G\times G\to[0,\infty)$ as
\begin{align}
\phi^c_G(x,y)&=\frac{2|x-y|}{c(d_G(x)+d_G(y))},\nonumber\\
\widehat{\phi}^c_G(x,y)&=\log\left(1+\frac{|x-y|}{c(d_G(x)+d_G(y))/2}\right),\nonumber\\
\widetilde{\phi}^c_G(x,y)&=\frac{|x-y|}{|x-y|+c(d_G(x)+d_G(y))}\label{phi}.
\end{align}    
If $c=1$ here, the function $\widetilde{\phi}^c_G$ is equal to the $t$-metric introduced in \cite{inm}.

\begin{theorem}\label{thm_tcg}
For a domain $G\subsetneq\R^n$ and a constant $0<c\leq1$, the function $\widetilde{\phi}^c_G$ is a metric.   
\end{theorem}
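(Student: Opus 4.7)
The plan is to reduce the general case $0<c\leq 1$ to the case $c=1$, where $\widetilde{\phi}^1_G$ coincides with the $t$-metric of \cite{inm} and is therefore already known to be a metric. Symmetry, nonnegativity, and the identity-of-indiscernibles property $\widetilde{\phi}^c_G(x,y)=0\iff x=y$ are all immediate from \eqref{phi}, so the only substantive issue is the triangle inequality.

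First I would rewrite $\widetilde{\phi}^c_G$ as a one-variable transform of $\widetilde{\phi}^1_G$. Put $u=|x-y|$, $v=d_G(x)+d_G(y)$, and $t=\widetilde{\phi}^1_G(x,y)=u/(u+v)\in[0,1)$. Then $v/u=(1-t)/t$, so a short algebraic manipulation gives
\[
\widetilde{\phi}^c_G(x,y)=\frac{u}{u+cv}=\frac{1}{1+c(v/u)}=\frac{t}{c+(1-c)t}=h_c(t),
\]
where $h_c:[0,\infty)\to[0,\infty)$ is defined by $h_c(t)=t/(c+(1-c)t)$. Consequently $\widetilde{\phi}^c_G=h_c\circ \widetilde{\phi}^1_G$ on $G\times G$.

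Next I would invoke Proposition \ref{prop_dfunc} applied to the metric $\widetilde{\phi}^1_G$ and the function $h_c$. The hypotheses are elementary to verify: $h_c(0)=0$; the derivative $h_c'(t)=c/(c+(1-c)t)^2$ is strictly positive on $[0,\infty)$, so $h_c$ is increasing; and the ratio
\[
\frac{h_c(k)}{k}=\frac{1}{c+(1-c)k}
\]
is non-increasing on $(0,\infty)$ because $0<c\leq 1$ makes the denominator nondecreasing in $k$. Since $\widetilde{\phi}^1_G$ is a metric by \cite{inm}, Proposition \ref{prop_dfunc} yields that $\widetilde{\phi}^c_G$ is a metric as well.

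There is no real obstacle: once the representation $\widetilde{\phi}^c_G=h_c\circ\widetilde{\phi}^1_G$ is spotted, everything reduces to checking two elementary properties of a rational function on $[0,\infty)$. The only minor care needed is confirming that the constraint $c\leq 1$ is exactly what makes $h_c(k)/k$ monotone the right way; for $c>1$ this monotonicity fails, which explains why the theorem stops at $c=1$. An alternative route would be a direct hand-proof of the triangle inequality valid for every $c\in(0,1]$, but this would essentially duplicate the $c=1$ argument already carried out in \cite{inm}.
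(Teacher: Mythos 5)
Your proof is correct, but it takes a genuinely different route from the paper. You represent $\widetilde{\phi}^c_G=h_c\circ\widetilde{\phi}^1_G$ with $h_c(t)=t/(c+(1-c)t)$ — the algebra checks out: with $t=u/(u+v)$, $u=|x-y|$, $v=d_G(x)+d_G(y)$ one indeed gets $u/(u+cv)=t/(c+(1-c)t)$ — and then apply Proposition \ref{prop_dfunc}, using that $\widetilde{\phi}^1_G$ is the $t$-metric of \cite{inm} and hence already known to be a metric; the hypotheses on $h_c$ hold exactly because $0<c\leq1$ makes $c+(1-c)k$ nondecreasing, so $h_c$ is increasing, $h_c(0)=0$, and $h_c(k)/k=1/(c+(1-c)k)$ is non-increasing (constant when $c=1$, which is harmless since $h_1$ is the identity). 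The paper instead verifies the triangle inequality directly: from the Euclidean bound $cd_G(z)\leq|z-y|+cd_G(y)$ (which is also where $c\leq1$ enters there) it bounds $\widetilde{\phi}^c_G(x,y)$ by $\bigl(|x-z|+|z-y|\bigr)/\bigl(|x-z|+|z-y|+c(d_G(x)+d_G(y))\bigr)$, splits the fraction, and estimates each piece by $\widetilde{\phi}^c_G(x,z)$ and $\widetilde{\phi}^c_G(z,y)$. The trade-off: your argument is shorter and cleanly isolates why the theorem stops at $c=1$ (the ratio monotonicity of $h_c$ fails for $c>1$, though note that this only shows the method breaks, not the statement), but it imports the $c=1$ base case from \cite{inm}; the paper's computation is self-contained and in effect re-proves that base case along the way. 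Since \cite{inm} is prior independent work, there is no circularity and your reduction is a legitimate alternative proof.
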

\begin{proof}
We only need to show that $\widetilde{\phi}^c_G$ fulfills the triangle inequality since it fulfills the other properties of a metric trivially. By the triangle inequality of the Euclidean metric, we have
\begin{align*}
cd_G(z)\leq c(|z-y|+d_G(y))\leq |z-y|+cd_G(y)    
\end{align*}
for all $y,z\in G$ and $c\in(0,1]$. It follows from this that
\begin{align*}
\widetilde{\phi}^c_G(x,y)
&\leq\frac{|x-z|+|z-y|}{|x-z|+|z-y|+c(d_G(x)+d_G(y))}\\
&=\frac{|x-z|}{|x-z|+|z-y|+cd_G(x)+cd_G(y)}+\frac{|z-y|}{|x-z|+|z-y|+d_G(x)+d_G(y)}\\
&\leq\frac{|x-z|}{|x-z|+c(d_G(x)+d_G(z))}+\frac{|z-y|}{|z-y|+c(d_G(z)+d_G(y))}\\
&=\widetilde{\phi}^c_G(x,z)+\widetilde{\phi}^c_G(z,y),
\end{align*}
which concludes our proof.
\end{proof}

According to Theorem \ref{thm_1}, Theorem \ref{thm_tcg} above would follow if $\widehat{\phi}^c_G$ were a metric in all domains $G\subsetneq\R^n$ for $0<c\leq2$, so it is natural to ask whether the function $\widehat{\phi}^c_G$ is a metric, too. However, this is not the case in all domains, as proven below in Lemmas \ref{lem_TcounterH} and \ref{lem_Tcounterex}.

\begin{lemma}\label{lem_TcounterH}
If the boundary $\partial G$ of a domain $G\subsetneq\R^n$  contains a Euclidean line segment, then the function $\widehat{\phi}^c_G$ is not a metric in $G$ for any values of $c>0$.
\end{lemma}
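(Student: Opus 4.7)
The plan is to exhibit three points of $G$ at which the triangle inequality for $\widehat{\phi}^c_G$ fails, by exploiting the fact that $\widehat{\phi}^c_G(x,y)$ blows up when $x$ and $y$ approach two distinct boundary points simultaneously, whereas routing via any fixed interior point keeps the total bounded.

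Concretely, pick two distinct interior points $p_1, p_2$ of the segment $L\subset\partial G$ (available because $L$ has positive length), and fix any auxiliary $z\in G$, so that $d_G(z)>0$. Choose sequences $x_k, y_k\in G$ with $x_k\to p_1$ and $y_k\to p_2$; these exist because $p_1,p_2\in\partial G\subset\overline{G}$. Since $p_i\in\partial G$, one has $d_G(x_k)\le|x_k-p_1|\to 0$ and similarly $d_G(y_k)\to 0$, while $|x_k-y_k|\to|p_1-p_2|>0$. Consequently
\[
\widehat{\phi}^c_G(x_k, y_k)=\log\!\left(1+\frac{2|x_k-y_k|}{c(d_G(x_k)+d_G(y_k))}\right)\longrightarrow\infty.
\]
On the other hand, $|x_k-z|$ and $|y_k-z|$ remain bounded and $d_G(z)>0$ is a fixed positive constant, so both $\widehat{\phi}^c_G(x_k,z)$ and $\widehat{\phi}^c_G(z,y_k)$ are bounded in $k$. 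For all sufficiently large $k$ we then get
\[
\widehat{\phi}^c_G(x_k, y_k)>\widehat{\phi}^c_G(x_k, z)+\widehat{\phi}^c_G(z, y_k),
\]
violating the triangle inequality, so $\widehat{\phi}^c_G$ is not a metric, for any $c>0$.

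There is no real obstacle here; the only point to note is that $G$ being a nonempty open set guarantees an interior $z$ with $d_G(z)>0$. In fact this reasoning uses only the existence of two distinct points in $\partial G$, so it actually proves the stronger statement that $\widehat{\phi}^c_G$ fails to be a metric whenever $\partial G$ has at least two points; the segment hypothesis is simply a convenient way of producing such points.
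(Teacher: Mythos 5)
Your proof is correct, and it reaches the conclusion by a cleaner and strictly more general route than the paper. The paper's proof exploits exactly the same phenomenon --- $\widehat{\phi}^c_G(x,y)$ blows up as $x,y$ tend to two distinct boundary points while both legs through a fixed interior point stay bounded --- but it implements this with an explicit one-parameter family adapted to the segment: a point $z$ at fixed height above a point $q\in[u,v]$, and points $x,y$ at height proportional to $h$ on either side of $q$, placed inside a half-ball contained in $G$ so that $d_G(x)$, $d_G(y)$, $d_G(z)$ can be evaluated exactly; the resulting quantity $2\log\bigl(1+\sqrt{2-2h+h^2}/(c(h+1))\bigr)-\log\bigl(1+1/(ch)\bigr)$ tends to $-\infty$ as $h\to0^+$. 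Your version replaces this computation by a soft limiting argument, using only $d_G(x_k)\le|x_k-p_1|\to0$, $d_G(y_k)\to0$, $|x_k-y_k|\to|p_1-p_2|>0$, and $d_G(z)>0$; no perpendicularity or half-ball inclusion is needed, and indeed the segment hypothesis is not needed at all: your argument shows $\widehat{\phi}^c_G$ violates the triangle inequality whenever $\partial G$ contains two distinct points, which also subsumes Lemma \ref{lem_Tcounterex} and is consistent with Lemma \ref{lem_TcR0}, since $\partial(\R^n\setminus\{0\})$ is a single point. What the paper's construction buys is an explicit witness triple and an exact expression quantifying the failure for finite $h$; what yours buys is brevity and the stronger, essentially optimal statement. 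For the lemma as stated, your proof is complete.
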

\begin{proof}
Denote the endpoints of the Euclidean line segment on the boundary by $u$ and $v$, and choose some $q\in[u,v]$. For a small enough $r>0$, fix $k\in G$ so that $|q-k|=r$, $[q,k]$ is perpendicular against $[u,v]$, and the half-ball $\{x\in\R^n\,:\,|x-q|<r,|x-k|<|x-(2q-k)|\}$ is included in $G$. Fix then
\begin{align*}
x=q+\frac{r(u-q)}{4|q-u|}+\frac{h(k-q)}{4},\quad
y=q+\frac{r(q-u)}{4|q-u|}+\frac{h(k-q)}{4},\quad
z=q+\frac{k-q}{4}
\end{align*}
for $0<h<1/4$. Now, for $h\to0^+$, we have
\begin{align*}
\widehat{\phi}^c_G(x,z)+\widehat{\phi}^c_G(z,y)-\widehat{\phi}^c_G(x,y)=
2\log\left(1+\frac{\sqrt{2-2h+h^2}}{c(h+1)}\right)-\log\left(1+\frac{1}{ch}\right)<0.
\end{align*}
It follows that the function $\widehat{\phi}^c_G(x,y)$ is not a metric in this type of a domain.
\end{proof}

\begin{lemma}\label{lem_Tcounterex}
If there is an open ball inside the domain $G\subsetneq\R^n$ so that the endpoints of at least one diameter of this ball are on the boundary $\partial G$, then the function $\widehat{\phi}^c_G(x,y)$ is not a metric for any values of $c>0$.
\end{lemma}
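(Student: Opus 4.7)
The plan is to mimic the construction of Lemma \ref{lem_TcounterH}: I will exhibit three collinear points inside the ball $B$ that witness a failure of the triangle inequality for $\widehat{\phi}^c_G$. Since $\widehat{\phi}^c_G$ depends only on Euclidean distances, I may translate and rotate so that $B$ has center $0$ and radius $R>0$, with diameter endpoints $u=-Re_1$ and $v=Re_1$ lying on $\partial G$.

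Next, for a parameter $t\in(0,1)$ to be chosen close to $1$, I would take
$$x=-tRe_1,\qquad z=0,\qquad y=tRe_1,$$
all of which lie in $B\subset G$. The key step is to evaluate $d_G$ exactly at these three points, and this is where both hypotheses of the lemma are used. The inclusion $B\subset G$ gives the lower bound $d_G(w)\geq R-|w|$ for every $w\in B$ (via $|w-q|\geq|q|-|w|\geq R-|w|$ for any $q\in\partial G$); the presence of $u,v\in\partial G$ provides the matching upper bound $d_G(w)\leq\min(|w-u|,|w-v|)$. For the three chosen points these bounds coincide, yielding
$$d_G(x)=d_G(y)=R(1-t),\qquad d_G(z)=R.$$

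Substituting into the definition of $\widehat{\phi}^c_G$ reduces the matter to the single-variable asymptotic comparison
$$\widehat{\phi}^c_G(x,y)=\log\!\left(1+\tfrac{2t}{c(1-t)}\right),\qquad \widehat{\phi}^c_G(x,z)+\widehat{\phi}^c_G(z,y)=2\log\!\left(1+\tfrac{2t}{c(2-t)}\right).$$
As $t\to 1^-$, the first expression blows up like $-\log(1-t)$, while the second stays bounded by $2\log(1+2/c)$. Hence, for any fixed $c>0$, choosing $t$ sufficiently close to $1$ forces $\widehat{\phi}^c_G(x,z)+\widehat{\phi}^c_G(z,y)<\widehat{\phi}^c_G(x,y)$, and the triangle inequality fails.

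The main (and only genuine) obstacle is the exact evaluation of $d_G$ at $x,y,z$: one must be careful to invoke both hypotheses so that the lower and upper bounds for $d_G$ match on the diameter. After that point, the argument is a direct asymptotic estimate, structurally identical to the one used in the proof of Lemma \ref{lem_TcounterH}.
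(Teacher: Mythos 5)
Your proof is correct and follows essentially the same route as the paper: the paper also takes the center $z$ of the ball as the middle point and $x=z+kv$, $y=z-kv$ on the diameter, evaluates $d_G$ there exactly, and lets $k\to1^-$ so that $\widehat{\phi}^c_G(x,y)$ blows up while $\widehat{\phi}^c_G(x,z)+\widehat{\phi}^c_G(z,y)$ stays bounded. Your explicit verification of $d_G(x)=d_G(y)=R(1-t)$, $d_G(z)=R$ from both hypotheses is exactly the (implicit) step in the paper's argument, and your constants are consistent with the definition of $\widehat{\phi}^c_G$.
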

\begin{proof}
Denote the center of the aforementioned ball by $z$. For $k\in(0,1)$, let $x=z+kv$ and $y=z-kv$, where $v\in\R^n$ so that $|v|=d_G(z)$ and $z-v,z+v\in\partial G$. Now, if $k\to1^-$, we have
\begin{align*}
\widehat{\phi}^c_G(x,z)+\widehat{\phi}^c_G(z,y)-\widehat{\phi}^c_G(x,y)=
2\log\left(1+\frac{k}{c(2-k)}\right)-\log\left(1+\frac{k}{c(1-k)}\right)<0
\end{align*}
for all values of $c>0$.
\end{proof}

\begin{remark}
The condition of Lemma \ref{lem_Tcounterex} about  the diameter with endpoints on $\partial G$ holds if the domain $G$ is, for instance, the unit ball, an $n$-dimensional interval, or a twice-punctured space.    
\end{remark}

It follows from Theorem \ref{thm_1} that $\phi^c_G$ cannot be a metric in the domains of Lemmas \ref{lem_TcounterH} and \ref{lem_Tcounterex}. However, both functions $\phi^c_G$ and $\widehat{\phi}^c_G(x,y)$ are metrics when the domain $G$ is the punctured real space $\R^n\setminus\{0\}$. 

\begin{lemma}\label{lem_TcR0}
The functions $\phi^c_G$, $\widehat{\phi}^c_G$ and $\widetilde{\phi}^c_G$ are metrics in $G=\R^n\setminus\{0\}$ for all values of $c>0$.  \end{lemma}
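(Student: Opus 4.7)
In the domain $G = \R^n \setminus \{0\}$ one has $d_G(x) = |x|$ for every $x \in G$, so all three of $\phi^c_G$, $\widehat{\phi}^c_G$, $\widetilde{\phi}^c_G$ are built from the single function $f(x,y) := |x-y|/(|x|+|y|)$; multiplying by the positive constant $2/c$ does not affect metric axioms, so by Theorem~\ref{thm_1} it suffices to show that $f$ satisfies the triangle inequality. The remaining axioms (positivity, vanishing on the diagonal, symmetry) are immediate from the Euclidean metric.

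Writing $a = |x|$, $b = |y|$, $c = |z|$ and $u = |x-y|$, $v = |x-z|$, $w = |y-z|$, and clearing the positive common denominator $(a+b)(a+c)(b+c)$, the required inequality $f(x,y) \leq f(x,z) + f(z,y)$ rewrites as the polynomial assertion
\[
v(a+b)(b+c) + w(a+b)(a+c) - u(a+c)(b+c) \geq 0.
\]
The plan is to realise the left-hand side as a non-negative linear combination of Ptolemy's inequality applied to the four points $\{x, 0, y, z\} \subset \R^n$, namely $aw + bv - cu \geq 0$, together with the three Euclidean triangle inequalities $v+w-u \geq 0$, $u+w-v \geq 0$, $u+v-w \geq 0$ for the triple $\{x,y,z\}$. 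Each of these four quantities is manifestly non-negative, so if coefficients of the correct form can be found the desired polynomial inequality follows at once.

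The main obstacle is identifying those coefficients: they must each be a non-negative polynomial in $a,b,c$, and they must simultaneously match the $u$-, $v$-, and $w$-coefficients of the target polynomial. A short trial-and-error computation, motivated by the identity $(a+c)(b+c) = ab + c(a+b+c)$ and its two permutations, suggests weighting Ptolemy by $2c$ and weighting the three triangle inequalities by $(a+b)^{2}/2$, $(a-c)^{2}/2$, and $(b-c)^{2}/2$ respectively. All four weights are visibly non-negative, so it remains only to verify the resulting polynomial identity by direct expansion in $a,b,c,u,v,w$; this is routine. The triangle inequality for $f$, and hence for $\phi^c_G$, then follows for every $c>0$, and Theorem~\ref{thm_1} promotes the conclusion to $\widehat{\phi}^c_G$ and $\widetilde{\phi}^c_G$.
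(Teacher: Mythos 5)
Your proof is correct, but it takes a genuinely different and more self-contained route than the paper. The paper's argument is essentially one line: since $d_{\R^n\setminus\{0\}}(x)=|x|$, one has $\phi^c_{\R^n\setminus\{0\}}(x,y)=\tfrac{2}{c}\,s_{\R^n\setminus\{0\}}(x,y)$, where $s_G$ is the triangular ratio metric, which is cited as being a metric in every domain $G\subsetneq\R^n$; Theorem \ref{thm_1} then transfers the conclusion to $\widehat{\phi}^c_G$ and $\widetilde{\phi}^c_G$, exactly as in your last sentence. You instead reprove the key fact from scratch, and your decomposition does check out: with $a=|x|$, $b=|y|$, $c=|z|$, $u=|x-y|$, $v=|x-z|$, $w=|y-z|$, one has the identity
\begin{align*}
&v(a+b)(b+c)+w(a+b)(a+c)-u(a+c)(b+c)\\
&\qquad=2c\,(aw+bv-cu)+\frac{(a+b)^2}{2}(v+w-u)+\frac{(a-c)^2}{2}(u+w-v)+\frac{(b-c)^2}{2}(u+v-w),
\end{align*}
as one verifies by matching coefficients (for instance the coefficient of $u$ on the right is $-2c^2-\tfrac{(a+b)^2}{2}+\tfrac{(a-c)^2}{2}+\tfrac{(b-c)^2}{2}=-(a+c)(b+c)$, and the $v$- and $w$-coefficients match similarly), while Ptolemy's inequality applied to $x,z,y,0$ gives $cu\le aw+bv$ in every dimension. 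So the triangle inequality for $|x-y|/(|x|+|y|)$ follows from manifestly nonnegative weights; your route buys an elementary, explicit proof that avoids invoking the metric property of the triangular ratio metric (in effect you reprove the relevant special case of H\"ast\"o's result), whereas the paper's route buys brevity by citing it. Two cosmetic points: you overload the letter $c$ (the constant in $\phi^c_G$ versus $|z|$) --- harmless since the constant has been scaled away by that point, but rename one of them; and your first appeal to Theorem \ref{thm_1} is superfluous, since multiplying a metric by the positive constant $2/c$ trivially preserves the metric axioms --- only the final appeal, promoting $\phi^c_G$ to $\widehat{\phi}^c_G$ and $\widetilde{\phi}^c_G$, is needed, and that step matches the paper.
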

\begin{proof}
The triangular ratio metric $s_G$ is a metric in any domain $G\subsetneq\R^n$. In the punctured real space $\R^n\setminus\{0\}$, it is defined as
\begin{align*}
s_{\R^n\setminus\{0\}}(x,y)=\frac{|x-y|}{|x|+|y|},\quad x,y\in\R^n\setminus\{0\}.    
\end{align*}
Because of the equality $\phi^c_{\R^n\setminus\{0\}}(x,y)=2s_{\R^n\setminus\{0\}}(x,y)/c$, $\phi^c_G$ is a metric in $\R^n\setminus\{0\}$ for all $c>0$ and, by Theorem \ref{thm_1}, so are $\widehat{\phi}^c_G$ and $\widetilde{\phi}^c_G$.
\end{proof}

Let us now consider the modification of the arithmetic mean based on using exponents. Define the functions 
$\phi^{c,d}_G,\widehat{\phi}^{c,d}_G:G\times G\to[0,\infty)$, $\widetilde{\phi}^{c,d}_G:G\times G\to[0,\infty)$ as
\begin{align*}
\phi^{c,d}_G(x,y)&=\frac{|x-y|}{c((d_G(x)^d+d_G(y)^d)/2)^{1/d}},\\
\widehat{\phi}^{c,d}_G(x,y)&=\log\left(1+\frac{|x-y|}{c((d_G(x)^d+d_G(y)^d)/2)^{1/d}}\right),\\
\widetilde{\phi}^{c,d}_G(x,y)&=\frac{|x-y|}{|x-y|+2c((d_G(x)^d+d_G(y)^d)/2)^{1/d}}.
\end{align*}
for a domain $G\subsetneq\R^n$ and constants $c,d>0$.

\begin{theorem}\label{thm_hxy}
For $c,d>0$, the function $\phi^{c,d}_G$ is a metric in the domain $\R^n\setminus\{0\}$ if and only if $d\geq1/3$.
\end{theorem}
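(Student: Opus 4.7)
In $G=\R^n\setminus\{0\}$ we have $d_G(x)=|x|$, and the constant $c$ cancels from the triangle inequality, which (writing $M_d(a,b)=((a^d+b^d)/2)^{1/d}$) reads
\[
\frac{|x-y|}{M_d(|x|,|y|)}\;\le\;\frac{|x-z|}{M_d(|x|,|z|)}+\frac{|z-y|}{M_d(|z|,|y|)}\qquad(\ast).
\]
Positivity, symmetry and vanishing on the diagonal are immediate, so $(\ast)$ is the only condition to check. Both sides are invariant under the common homothety $x,y,z\mapsto\lambda x,\lambda y,\lambda z$ and under orthogonal maps, which I will use for normalization.

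For the necessity my plan is to exhibit three collinear points that violate $(\ast)$ when $d<1/3$: I take $x=1$, $z=1+s$, $y=(1+s)^2$ on the positive real axis of $\R^n\setminus\{0\}$, for small $s>0$. When the three points lie on a single ray, $\phi^{1,d}_G$ depends only on the ratios $p=z/x$ and $q=y/z$ through $h_d(t):=2^{1/d}(t-1)/(1+t^d)^{1/d}$ for $t>1$, and $(\ast)$ becomes the subadditivity-type statement $h_d(pq)\le h_d(p)+h_d(q)$. A Taylor expansion gives $h_d(1+\sigma)=\sigma-\sigma^2/2+\sigma^3(3-d)/8+O(\sigma^4)$; substituting $p=q=1+s$ and $pq=1+2s+s^2$, the $s$- and $s^2$-contributions cancel and I obtain
\[
h_d(p)+h_d(q)-h_d(pq)=\frac{s^3(3d-1)}{4}+O(s^4),
\]
which is strictly negative for every $d<1/3$ and all sufficiently small $s>0$, giving the required counterexample.

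For the sufficiency ($d\ge1/3$) I would first establish the full 1D inequality $h_d(pq)\le h_d(p)+h_d(q)$ on $(1,\infty)$ by showing that $H_d(u):=h_d(e^u)$ is concave on $[0,\infty)$ with $H_d(0)=0$: a direct computation of $H_d''(u)$ yields a numerator whose sign is controlled by the factor $(1-3d)$, consistent with the Taylor calculation above. To lift the 1D inequality to $\R^n\setminus\{0\}$, I would fix the triple of norms $(|x|,|y|,|z|)$ and argue that the configuration maximizing the ratio LHS/RHS of $(\ast)$ is the collinear-through-origin one, which then reduces to the already-established 1D inequality. I expect the main obstacle to be exactly this lifting step: the crude bound $|x-y|\le|x-z|+|z-y|$ is insufficient because the resulting norm-only inequality can fail when $|z|>\max(|x|,|y|)$, so the proof must exploit the sharper reverse-triangle bounds $|x-z|\ge\bigl||z|-|x|\bigr|$ and $|z-y|\ge\bigl||z|-|y|\bigr|$ and split into cases according to the relative sizes of $|x|,|y|,|z|$; the regime where $|z|$ dominates can be disposed of using that the LHS is uniformly bounded by $2^{1/d+1}$ while each RHS term tends to $2^{1/d}$, whereas the intermediate regime will require a perturbation argument reducing the extremal configuration to the collinear case.
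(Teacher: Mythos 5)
Your ``only if'' half is correct and complete: on a single ray through the origin the distance reduces to $h_d(t)=2^{1/d}(t-1)/(1+t^d)^{1/d}$, your expansion $h_d(1+\sigma)=\sigma-\sigma^2/2+(3-d)\sigma^3/8+O(\sigma^4)$ checks out, and with $p=q=1+s$ it indeed gives $h_d(p)+h_d(q)-h_d(pq)=\tfrac{3d-1}{4}s^3+O(s^4)<0$ for $d<1/3$ and small $s>0$, so the triangle inequality fails. Note that the paper itself proves the whole theorem, both directions, by a one-line citation of H\"ast\"o's Theorem 1.1, which is precisely the statement that the power-mean relative distance is a metric in $\R^n\setminus\{0\}$ exactly when the exponent is at least $1/3$; your counterexample is a self-contained substitute for the easy half of that citation.

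The ``if'' half, however, is a plan rather than a proof, and that is where essentially all of the content of H\"ast\"o's theorem lies. Concretely: (i) the one-dimensional inequality you propose to establish, subadditivity of $h_d$ on $(1,\infty)$, only covers triples lying on one ray with $z$ between $x$ and $y$ in the multiplicative sense; collinear configurations with $x$ and $y$ on opposite sides of the origin, or with $|z|$ outside $[|x|,|y|]$, are not of the form $h_d(pq)\le h_d(p)+h_d(q)$ with $p,q>1$, so even a successful reduction to the collinear case would not close the argument as formulated. Moreover the claimed concavity of $H_d(u)=h_d(e^u)$ is only asserted; since $H_d(u)=u+\tfrac{1-3d}{24}u^3+\dots$, the second-order term vanishes at $0$ and the sign analysis is genuinely delicate at the threshold $d=1/3$, so it must be carried out globally, not read off from the Taylor coefficient. (ii) The reduction from $\R^n\setminus\{0\}$ to collinear configurations --- your claim that the ratio of the two sides is maximized by points collinear through the origin --- is exactly the hard step (it is the substance of H\"ast\"o's dimension-reduction argument), and you explicitly leave it as a ``perturbation argument'' to be supplied; in addition, your disposal of the regime where $|z|$ dominates is inconclusive as stated, because your uniform bound $2^{1+1/d}$ for the left-hand side coincides with the limit of the right-hand side, so no strict comparison follows without the sharper bound $\max\{2,2^{1/d}\}$ and an accompanying quantitative argument. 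As it stands, the sufficiency direction of the theorem is unproved.
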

\begin{proof}
Follows from \cite[Thm 1.1, p. 39]{h}.
\end{proof}

\begin{corollary}
For $c>0$ and $d\geq1/3$, the functions $\widehat{\phi}^{c,d}_G(x,y)$ and $\widetilde{\phi}^{c,d}_G(x,y)$ are metrics in $G=\R^n\setminus\{0\}$. \end{corollary}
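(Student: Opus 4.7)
The plan is to observe that this corollary is an immediate consequence of combining Theorem \ref{thm_hxy} with Theorem \ref{thm_1}. First I would note that the three functions $\phi^{c,d}_G$, $\widehat{\phi}^{c,d}_G$, and $\widetilde{\phi}^{c,d}_G$ are precisely the instances of the general functions $d^c_G$, $\widehat{d}^c_G$, and $\widetilde{d}^c_G$ from the Introduction, all associated with the common mean
\begin{equation*}
M(x,y)=\left(\frac{x^d+y^d}{2}\right)^{1/d}.
\end{equation*}
In particular, the three functions are built from the same $M$, which is exactly the hypothesis under which Theorem \ref{thm_1} operates.

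Given this identification, Theorem \ref{thm_hxy} supplies the fact that $\phi^{c,d}_G$ is a metric on $G=\R^n\setminus\{0\}$ whenever $d\geq 1/3$ (with $c>0$ arbitrary, since multiplying a metric by a positive constant preserves the metric property). Theorem \ref{thm_1} then asserts that if $d^c_G$ is a metric for a given choice of $G$ and $c$, so are both $\widehat{d}^c_G$ and $\widetilde{d}^c_G$. Applying this to our setting yields that $\widehat{\phi}^{c,d}_G$ and $\widetilde{\phi}^{c,d}_G$ are metrics on $\R^n\setminus\{0\}$ for every $c>0$ and $d\geq 1/3$.

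There is no real obstacle here, since all the heavy lifting has been done in Theorem \ref{thm_hxy} (triangle inequality for the power-mean-based function, relying on H\"ast\"o's work) and in Theorem \ref{thm_1} (propagating the metric property through $\log(1+k)$ and $\operatorname{th}(k/2)$ via Proposition \ref{prop_dfunc}). The proof therefore consists of a single sentence invoking both results.
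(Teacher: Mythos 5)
Your proposal is correct and matches the paper's proof exactly: the paper also derives the corollary in one line by combining Theorem \ref{thm_hxy} (metricity of $\phi^{c,d}_G$ in $\R^n\setminus\{0\}$ for $d\geq 1/3$) with Theorem \ref{thm_1} (propagation of the metric property to $\widehat{\phi}^{c,d}_G$ and $\widetilde{\phi}^{c,d}_G$). Your added remarks about the common mean $M$ and the scaling by $c$ are just a more explicit spelling-out of the same argument.
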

\begin{proof}
Follows from Theorems \ref{thm_1} and \ref{thm_hxy}.
\end{proof}

Based on the computer tests, only the function $\widetilde{\phi}^{c,d}$ is a metric in $G\in\{\uhp^n,\B^n\}$.

\begin{conjecture}
For all $c,d>0$ and $G\in\{\uhp^n,\B^n\}$, the function $\widetilde{\varphi}^c_G$ is a metric but neither $\phi^{c,d}_G$ nor $\widehat{\phi}^{c,d}_G$ fulfills the triangle inequality.    
\end{conjecture}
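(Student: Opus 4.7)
The plan treats the conjecture as two separate claims. Since $\widetilde{\varphi}^c_G$ is not defined in the excerpt, I read it as the $\widetilde{d}^c_G$-style function built from the logarithmic mean $L(a,b)=(a-b)/(\log a-\log b)$, presumably introduced in Section~4. The two halves are then: (i) prove that $\widetilde{\varphi}^c_G$ is a metric on $\uhp^n$ and $\B^n$ for every $c>0$; (ii) exhibit counterexamples showing $\phi^{c,d}_G$ and $\widehat{\phi}^{c,d}_G$ violate the triangle inequality on these domains for every $c,d>0$.

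For (ii), I would adapt the counterexamples from Lemmas~\ref{lem_TcounterH} and \ref{lem_Tcounterex}. In $\B^n$ take $x=kv$, $z=0$, $y=-kv$ with $|v|=1$ and let $k\to 1^-$; then $d_G(x)=d_G(y)=1-k\to 0$ while $d_G(z)=1$, so $((d_G(x)^d+d_G(y)^d)/2)^{1/d}=1-k\to 0$ whereas $((d_G(x)^d+d_G(z)^d)/2)^{1/d}\to 2^{-1/d}$. Hence $\phi^{c,d}_G(x,y)\sim 2k/(c(1-k))\to\infty$ while $\phi^{c,d}_G(x,z)+\phi^{c,d}_G(z,y)$ remains bounded, which violates the triangle inequality; an analogous logarithmic computation handles $\widehat{\phi}^{c,d}_G$. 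In $\uhp^n$ no diameter has both endpoints on $\partial G$, but $\partial\uhp^n$ is a hyperplane containing line segments, so the three-point construction of Lemma~\ref{lem_TcounterH} transplants. Tracking through its asymptotic with $P(a,b)=((a^d+b^d)/2)^{1/d}$ in place of the arithmetic mean should again yield $\widehat{\phi}^{c,d}_G(x,z)+\widehat{\phi}^{c,d}_G(z,y)-\widehat{\phi}^{c,d}_G(x,y)\to -\infty$ as $h\to 0^+$, for every fixed $c,d>0$.

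For (i), I would mimic Theorem~\ref{thm_tcg}. Bounding $\widetilde{\varphi}^c_G(x,y)$ via $|x-y|\le |x-z|+|z-y|$ and splitting the resulting fraction reduces the triangle inequality to
\[
2c\bigl(L(d_G(x),d_G(z))-L(d_G(x),d_G(y))\bigr)\le |z-y|
\]
together with its symmetric analogue. Since $L$ is coordinatewise nondecreasing and bounded above by the arithmetic mean, combined with the $1$-Lipschitz bound $|d_G(z)-d_G(y)|\le |z-y|$, this should be routine when $d_G(x)$ and $d_G(y)$ have comparable magnitudes. The main obstacle is the regime where they differ by many orders of magnitude: there the partial derivative $\partial_b L(a,b)$ becomes large and the arithmetic-mean upper bound is too lossy. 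To handle that, I would exploit the convexity of $\uhp^n$ and $\B^n$: the function $d_G$ is concave along segments, which constrains how much $d_G$ can swing between $x$, $z$, and $y$ and should close the gap. If needed, one can reduce by isometric symmetry to a one-dimensional model — a vertical ray in $\uhp^n$ or a diameter in $\B^n$ — where the inequality becomes an elementary one-variable calculus problem.
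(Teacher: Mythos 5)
First, note that the statement you are proving is left as a \emph{conjecture} in the paper: the authors offer no proof, only the remark that computer tests support it, so there is no argument of theirs to compare with; also, the symbol $\widetilde{\varphi}^c_G$ in this conjecture is almost certainly a typo for $\widetilde{\phi}^{c,d}_G$ (the power-mean function), given the sentence preceding it, though under either reading the hard part is the same. Your part (ii) is essentially correct and is the easy half: with $x=kv$, $z=0$, $y=-kv$ in $\B^n$ (or $x=1+hi$, $z=e_n$, $y=-1+hi$ in $\uhp^n$), one has $\widehat{\phi}^{c,d}_G(x,y)\to\infty$ while $\widehat{\phi}^{c,d}_G(x,z)+\widehat{\phi}^{c,d}_G(z,y)$ stays bounded because $((h^d+1)/2)^{1/d}\to 2^{-1/d}>0$ as $h\to0^+$, so the triangle inequality fails for every $c,d>0$; this is exactly the mechanism of Lemmas \ref{lem_TcounterH} and \ref{lem_Tcounterex}, and it also disposes of $\phi^{c,d}_G$ directly or via Theorem \ref{thm_1}.

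The genuine gap is in part (i), which is precisely the part the authors could not prove. Your reduction inequality $2c\bigl(L(d_G(x),d_G(z))-L(d_G(x),d_G(y))\bigr)\le |z-y|$ is simply false for large $c$: whenever $d_G(z)>d_G(y)$ the left side grows linearly in $c$ while the right side is fixed (in $\uhp^2$ take $x=R+i$, $z=i$, $y=\epsilon i$ with $\epsilon$ small; the left side is about $2c$ and the right side about $1$). This is no accident: the split-fraction argument of Theorem \ref{thm_tcg} works because the assumption $0<c\le1$ makes $c\,d_G$ a $1$-Lipschitz-type quantity, and the conjecture's whole point is the range of all $c>0$ in the special domains $\uhp^n,\B^n$, where some genuinely different, domain-specific argument is needed. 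Even for small $c$ your estimate is not routine, since $\partial_b L(a,b)$ blows up as $b/a\to0$, and the proposed repairs are not proofs: concavity of $d_G$ along segments is invoked without any concrete estimate, and the final ``reduce by isometric symmetry to a vertical ray or a diameter'' step is invalid, because the triangle inequality involves three arbitrary points and their worst-case configuration need not be collinear --- the two-point dimension reduction of Remark \ref{r_dim} does not extend to three points. So the first half of the conjecture remains unproven in your sketch, and you should present part (ii) as a proved lemma and part (i) as open.
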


\section{Logarithmic mean}

For two positive real numbers $x,y>0$, let $L(x,y)$ denote their logarithmic mean value:
\begin{align}
L(x,y)=\frac{x-y}{\log(x)-\log(y)}\quad\text{for}\quad x\neq y,\quad L(x,x)=x.
\end{align}
Let $G\subsetneq\R^n$ be a domain and $x,y$ points in $G$. For $c>0$, we can define the functions $\varphi^c_G,\widehat{\varphi}^c_G:G\times G\to[0,\infty)$, $\widetilde{\varphi}^c_G:G\times G\to[0,1)$ as
\begin{align}
\varphi^c_G(x,y)&=\frac{|x-y|}{cL(d_G(x),d_G(y))},\nonumber\\
\widehat{\varphi}^c_G(x,y)&=\log\left(1+\frac{|x-y|}{cL(d_G(x),d_G(y))}\right),\nonumber\\
\widetilde{\varphi}^c_G(x,y)&=\frac{|x-y|}{|x-y|+2cL(d_G(x),d_G(y))}\label{varphi}.
\end{align}

\begin{lemma}
For all $c>0$, the functions $\varphi^c_G$, $\widehat{\varphi}^c_G$, and $\widetilde{\varphi}^c_G$ are metrics when their domain $G$ is the punctured real space $\R^n\setminus\{0\}$.   
\end{lemma}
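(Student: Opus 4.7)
By Theorem~\ref{thm_1} it is enough to establish the triangle inequality for $\varphi^c_G$, since the corresponding statements for $\widehat{\varphi}^c_G$ and $\widetilde{\varphi}^c_G$ then follow automatically; the remaining metric axioms are immediate from the definition. Scaling absorbs $c>0$, so I may fix $c=1$. In $G=\R^n\setminus\{0\}$ the boundary is the single point $0$, so $d_G(x)=|x|$ and the required inequality reads
$$
\frac{|x-y|}{L(|x|,|y|)} \le \frac{|x-z|}{L(|x|,|z|)} + \frac{|z-y|}{L(|z|,|y|)}, \qquad x,y,z\in G.
$$

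The logarithmic mean $L$ is symmetric, continuous on $(0,\infty)^2$, positive, strictly increasing in each argument, and homogeneous of degree one: $L(\lambda a,\lambda b)=\lambda L(a,b)$. My plan is to invoke H\"ast\"o's criterion \cite[Thm.~1.1]{h} --- the same tool used in Theorem~\ref{thm_hxy} --- which for radial expressions $|x-y|/M(|x|,|y|)$ with $M$ of this type reduces the triangle inequality on $\R^n\setminus\{0\}$ to a one-dimensional check along a ray through the origin and along the union of two opposite rays.

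The ray case is handled by the identity
$$
\frac{a-b}{L(a,b)} = \log a - \log b, \qquad 0<b<a,
$$
which follows directly from the definition of $L$. Consequently, restricted to any ray from $0$, $\varphi^1_G$ pulls back under $t\mapsto \log t$ to the standard metric on $\R$, and the triangle inequality holds with equality. For points $x,y,z$ lying on two oppositely directed rays, the homogeneity of $L$ lets me normalize $|x|=1$; the inequality then becomes a one-variable estimate, which I would dispatch by combining monotonicity of $L$ with the classical bounds $\sqrt{ab}\le L(a,b)\le (a+b)/2$.

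The main obstacle I anticipate is exactly this opposite-ray reduction, since the clean logarithmic identity no longer gives equality and the resulting scalar inequality must be verified by a genuine calculation. Once reduced to one real variable, however, it should yield to elementary calculus, after which the lemma follows.
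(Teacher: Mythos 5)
Your overall skeleton is right and matches the paper: reduce to $\varphi^1_{\R^n\setminus\{0\}}$ (the constant $c$ is harmless), prove or quote the triangle inequality for it, and then get $\widehat{\varphi}^c$ and $\widetilde{\varphi}^c$ from Theorem \ref{thm_1}. But the core of your argument is deferred rather than done, and the deferral hides two genuine problems. First, the reduction you lean on is misattributed and unjustified: H\"ast\"o's Theorem 1.1 is the specific characterization for the power means $((x^d+y^d)/2)^{1/d}$ (it is what this paper uses for Theorem \ref{thm_hxy}); it is not a general criterion reducing the triangle inequality for $|x-y|/M(|x|,|y|)$ to configurations on a ray and on two opposite rays. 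If such a collinearity reduction is to be used, it must either be proved (for three points $x,y,z$ the configuration together with the origin is genuinely three-dimensional, so collinearity of the extremal case is a real claim) or cited precisely. In fact the paper's own proof is a one-line citation of \cite[Lemma 3.1(1), p.~46]{h}, which already asserts that the logarithmic-mean relative distance is a metric in $\R^n\setminus\{0\}$, so the statement you are trying to rederive is available off the shelf.

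Second, even granting the reduction, the decisive case is exactly the one you leave unverified: for $x=au$, $z=tu$, $y=-bu$ with $|u|=1$ one needs, e.g.,
\begin{align*}
\frac{a+b}{L(a,b)}\;\leq\;\frac{|a-t|}{L(a,t)}+\frac{t+b}{L(t,b)},
\end{align*}
and your proposed tools (monotonicity of $L$ together with $\sqrt{ab}\leq L(a,b)\leq(a+b)/2$) cannot close it: replacing $L(a,b)$ by $\sqrt{ab}$ on the left and $L(a,t)$, $L(t,b)$ by the arithmetic means on the right already fails near the equality configuration $t=a$, $b$ fixed (the left side blows up like $(a+b)/\sqrt{ab}$ while the right side tends to $2$ under those crude bounds). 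So the ``genuine calculation'' you acknowledge is not a routine afterthought; it is the whole content of the lemma, and as written the proposal does not contain it. The clean fix is to cite H\"ast\"o's Lemma 3.1(1) directly for $\varphi^1_{\R^n\setminus\{0\}}$ and then conclude exactly as you do via Theorem \ref{thm_1}; the part of your argument that is complete (the identity $(a-b)/L(a,b)=\log a-\log b$ on a single ray, and the passage to $\widehat{\varphi}^c$, $\widetilde{\varphi}^c$) is fine but covers only the easy portion.
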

\begin{proof}
By \cite[Lemma 3.1(1), p. 46]{h}, $\varphi^1_{\R^n\setminus\{0\}}$ is a metric so trivially $\varphi^c_{\R^n\setminus\{0\}}$ is a metric for all $c>0$ and, by Theorem \ref{thm_1}, so are $\widehat{\varphi}^c_{\R^n\setminus\{0\}}$ and $\widetilde{\varphi}^c_{\R^n\setminus\{0\}}$.   
\end{proof}

\begin{lemma}
Neither $\varphi^c_G$ nor $\widehat{\varphi}^c_G$ is a metric for any $c>0$ when their domain $G$ is the upper half-space $\uhp^n$.    
\end{lemma}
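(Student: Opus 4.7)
The plan is to appeal to Theorem \ref{thm_1} in contrapositive form: since $\varphi^c_G$ being a metric would force $\widehat{\varphi}^c_G$ to be one too, it suffices to exhibit three points in $\uhp^n$ at which $\widehat{\varphi}^c_G$ violates the triangle inequality. The construction mirrors the one used in Lemma \ref{lem_TcounterH}, exploiting the flat piece of the boundary $\partial\uhp^n$.

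For a small parameter $h\in(0,1)$ I take $x=-e_1+he_n$, $y=e_1+he_n$, and $z=e_n$ in $\uhp^n$. Then $d_{\uhp^n}(x)=d_{\uhp^n}(y)=h$, $d_{\uhp^n}(z)=1$, and the relevant Euclidean distances are $|x-y|=2$ and $|x-z|=|y-z|=\sqrt{1+(1-h)^2}$. The logarithmic means reduce to $L(h,h)=h$ and $L(h,1)=(1-h)/|\log h|$.

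The decisive feature is the extremely slow decay of $L(h,1)$ toward $0$ as $h\to0^+$: the disparity between the two arguments is absorbed only logarithmically. Consequently,
\[
\widehat{\varphi}^c_G(x,y)=\log\!\left(1+\frac{2}{ch}\right)=|\log h|+O(1),
\]
whereas
\[
\widehat{\varphi}^c_G(x,z)+\widehat{\varphi}^c_G(z,y)=2\log\!\left(1+\frac{\sqrt{1+(1-h)^2}\,|\log h|}{c(1-h)}\right)=2\log|\log h|+O(1).
\]
Since $|\log h|\gg 2\log|\log h|$, for every fixed $c>0$ one can choose $h$ small enough that the triangle inequality fails on the triple $(x,z,y)$, which by Theorem \ref{thm_1} simultaneously kills $\varphi^c_G$ as a metric.

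The only real obstacle is conceptual rather than technical: one must recognize that the logarithmic mean masks precisely the kind of asymmetry between $d_G(x)$ and $d_G(z)$ that would otherwise force $\widehat{\varphi}^c_G(x,z)$ to blow up at a rate matching $\widehat{\varphi}^c_G(x,y)$. Once the slow-decay behaviour of $L$ is identified, the asymptotic comparison above is routine, and the argument is robust under mild perturbation of $x,y,z$ within $\uhp^n$.
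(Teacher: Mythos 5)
Your proposal is correct and follows essentially the same route as the paper: a three-point counterexample in which two points approach the boundary symmetrically while the third stays at height $1$, so that $\widehat{\varphi}^c(x,y)$ grows like $|\log h|$ while $\widehat{\varphi}^c(x,z)+\widehat{\varphi}^c(z,y)$ grows only like $2\log|\log h|$ because $L(h,1)\sim 1/|\log h|$, followed by the contrapositive of Theorem \ref{thm_1} to rule out $\varphi^c_{\uhp^n}$ as well. The paper's configuration $x=k+(1-k)i$, $y=-k+(1-k)i$, $z=e_n$ with $k\to1^-$ is just a reparametrization of your limiting triple, so the arguments are the same in substance.
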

\begin{proof}
Let $x=k+(1-k)i$, $y=-k+(1-k)i$, and $z=e_n$ with $0<k<1$. For all $c>0$, 
\begin{align*}
\widehat{\varphi}^c_{\uhp^n}(x,z)+\widehat{\varphi}^c_{\uhp^n}(z,y)-\widehat{\varphi}^c_{\uhp^n}(x,y)=
2\log\left(1-\frac{\sqrt{2}\log(1-k)}{c}\right) 
-\log\left(1+\frac{2k}{c(1-k)}\right)<0    
\end{align*}
when $k\to1^-$. Consequently, $\widehat{\varphi}^c_{\uhp^n}$ does not fulfill the triangle inequality and is not a metric for any $c>0$. If $\varphi^c_{\uhp^n}$ was a metric, then it would follow from Theorem \ref{thm_1} that $\widehat{\varphi}^c_{\uhp^n}$ is a metric, too, so $\varphi^c_{\uhp^n}$ cannot be a metric.
\end{proof}

\begin{lemma}
Neither $\varphi^c_G$ nor $\widehat{\varphi}^c_G$ is a metric for any $c>0$ when their domain $G$ is the unit ball $\B^n$.    
\end{lemma}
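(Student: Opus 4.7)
The plan is to mirror the argument of the previous lemma for the half-space, adapted to the geometry of the unit ball. The natural candidate triple is $z=0$ (so that $d_{\B^n}(z)=1$) together with $x=ke_1$ and $y=-ke_1$ for a parameter $k\in(0,1)$, sent eventually to $1^-$. Then $d_{\B^n}(x)=d_{\B^n}(y)=1-k$, $|x-z|=|z-y|=k$, and $|x-y|=2k$, so all of the quantities entering $\widehat{\varphi}^c_{\B^n}$ are explicit.

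Next I would compute the relevant logarithmic means. Since $d_{\B^n}(x)=d_{\B^n}(y)$, we have $L(d_{\B^n}(x),d_{\B^n}(y))=1-k$, giving
\begin{align*}
\widehat{\varphi}^c_{\B^n}(x,y)=\log\!\left(1+\frac{2k}{c(1-k)}\right).
\end{align*}
For the mixed pair, $L(1-k,1)=(1-k-1)/(\log(1-k)-\log 1)=-k/\log(1-k)$, which yields
\begin{align*}
\widehat{\varphi}^c_{\B^n}(x,z)=\widehat{\varphi}^c_{\B^n}(z,y)=\log\!\left(1-\frac{\log(1-k)}{c}\right).
\end{align*}
Thus the triangle-inequality defect is
\begin{align*}
2\log\!\left(1-\frac{\log(1-k)}{c}\right)-\log\!\left(1+\frac{2k}{c(1-k)}\right).
\end{align*}

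The key step is the asymptotics as $k\to 1^-$: the first term grows like $2\log\log(1/(1-k))$, while the subtracted term grows like $\log(1/(1-k))$. Because $\log$ beats $2\log\log$, the expression tends to $-\infty$, and in particular is strictly negative for $k$ sufficiently close to $1$, independent of $c>0$. This contradicts the triangle inequality and shows $\widehat{\varphi}^c_{\B^n}$ is not a metric. Finally, by Theorem \ref{thm_1}, if $\varphi^c_{\B^n}$ were a metric then $\widehat{\varphi}^c_{\B^n}$ would be one too, so $\varphi^c_{\B^n}$ cannot be a metric either.

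The main obstacle is essentially bookkeeping: making sure the asymptotic comparison is done cleanly (the double logarithm versus the single logarithm), and verifying that the choice of points indeed lies in $\B^n$ with the expected boundary distances. No new conceptual ingredient beyond the previous lemma is needed.
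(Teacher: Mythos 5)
Your proposal is correct and follows essentially the same route as the paper: the same triple $x=ke_1$, $y=-ke_1$, $z=0$, the same evaluation of the logarithmic means yielding the defect $2\log\bigl(1-\log(1-k)/c\bigr)-\log\bigl(1+2k/(c(1-k))\bigr)$, negativity as $k\to1^-$ by comparing $\log$ versus $\log\log$ growth, and the reduction of the $\varphi^c_{\B^n}$ case to the $\widehat{\varphi}^c_{\B^n}$ case via Theorem \ref{thm_1}.
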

\begin{proof}
Let $x=ke_1$, $y=-ke_1$, and $z=0$ with $0<k<1$. For all $c>0$, 
\begin{align*}
\widehat{\varphi}^c_{\B^n}(x,z)+\widehat{\varphi}^c_{\B^n}(z,y)-\widehat{\varphi}^c_{\B^n}(x,y)=
2\log\left(1-\frac{\log(1-k)}{c}\right) 
-\log\left(1+\frac{2k}{c(1-k)}\right)<0    
\end{align*}
when $k\to1^-$. The function $\widehat{\varphi}^c_{\B^n}$ is therefore not a metric for any $c>0$ and, because of the contradiction that would follow from Theorem \ref{thm_1} if $\varphi^c_{\B^n}$ was a metric, the function $\varphi^c_{\B^n}$ is not a metric, either.        
\end{proof}

However, computer tests suggest that the following result holds.

\begin{conjecture}
For all $c>0$, the function $\widetilde{\varphi}^c_G$ is a metric when $G\in\{\uhp^n,\B^n\}$.    
\end{conjecture}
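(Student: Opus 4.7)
The function $\widetilde{\varphi}^c_G$ is manifestly symmetric, non-negative, and vanishes precisely on the diagonal, so the only non-trivial point is subadditivity. Following the scheme of Theorem \ref{thm_tcg}, I would first apply the Euclidean triangle inequality together with the monotonicity of $t\mapsto t/(t+A)$ to obtain
\begin{align*}
\widetilde{\varphi}^c_G(x,y)\le\frac{|x-z|}{|x-z|+|z-y|+2cL(p,r)}+\frac{|z-y|}{|x-z|+|z-y|+2cL(p,r)},
\end{align*}
where $p=d_G(x)$, $q=d_G(z)$, $r=d_G(y)$, and then attempt to dominate the two summands by $\widetilde{\varphi}^c_G(x,z)$ and $\widetilde{\varphi}^c_G(z,y)$ respectively. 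This term-by-term domination would follow from the auxiliary estimates $2cL(p,q)\le|z-y|+2cL(p,r)$ and $2cL(q,r)\le|x-z|+2cL(p,r)$.

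The main obstacle is that these auxiliary estimates cannot hold uniformly in $c>0$: since $\partial L/\partial q$ is unbounded as the ratio $p/q$ blows up, the natural reduction via $|d_G(z)-d_G(y)|\le|z-y|$ (valid in $\uhp^n$ and $\B^n$ because $d_G$ is $1$-Lipschitz there) yields only a linear bound $L(p,q)-L(p,r)\le|q-r|/(2c)$ that fails for large $c$ once the three boundary distances live on very different scales. Consequently the term-by-term splitting is too weak and must be replaced by a combined estimate. I would therefore multiply out the desired triangle inequality to obtain a single polynomial inequality in $|x-y|,|x-z|,|z-y|,L(p,q),L(q,r),L(p,r)$ and $c$, and attack it using the extra information specific to $G\in\{\uhp^n,\B^n\}$: namely, that $d_G$ is $1$-Lipschitz and that the quantity $|x-y|/L(d_G(x),d_G(y))$ equals the hyperbolic distance $\rho_{\uhp^n}(x,y)$ on vertical segments of $\uhp^n$, tying the logarithmic-mean expression to a genuine metric in the critical geodesic configurations.

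If a purely algebraic proof remains out of reach, my fallback is an extremal-configuration argument. Fix $x,y\in G$ and set $F(z):=\widetilde{\varphi}^c_G(x,z)+\widetilde{\varphi}^c_G(z,y)-\widetilde{\varphi}^c_G(x,y)$; since $L(d_G(x),d_G(z))\to0$ as $d_G(z)\to0$, one checks that $F(z)\to 2-\widetilde{\varphi}^c_G(x,y)>0$ whenever $z\to\partial G$ or $z\to\infty$, so any failure of subadditivity must arise at an interior critical point of $F$. Using the translational and rotational symmetries of $G$ to position $x,y$ in a convenient planar section, the search for such critical points reduces to a low-dimensional calculus problem. The most delicate regime is $c\to\infty$, in which $\widetilde{\varphi}^c_G(x,y)\sim|x-y|/(2cL(p,r))$ and the triangle inequality degenerates into the subadditivity of the quasi-hyperbolic quantity $|x-y|/L(d_G(x),d_G(y))$, which by the previous lemmas is not a metric on its own; any successful proof must therefore exploit the nonlinear ``saturation'' coming from the extra $|x-y|$ in the denominator of $\widetilde{\varphi}^c_G$, and I expect this to be the technical heart of the argument.
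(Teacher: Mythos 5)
The statement you are addressing is stated in the paper only as a conjecture, supported by computer experiments; the paper itself offers no proof, so there is nothing for your argument to be measured against except the claim itself. More importantly, what you have written is not a proof but a research plan, and by its own admission each of its three routes is left incomplete. The term-by-term splitting in the style of Theorem \ref{thm_tcg} is correctly diagnosed as failing: the needed estimates $2cL(p,q)\le|z-y|+2cL(p,r)$ cannot hold because $L$ is not Lipschitz with a constant compatible with arbitrary $c>0$ (indeed $\partial L/\partial q=\int_0^1(1-t)(p/q)^t\,dt$ blows up as $p/q\to\infty$), so this route is abandoned. The proposed replacement — ``multiply out the desired triangle inequality to obtain a single polynomial inequality'' — is never carried out, and it is not even clear that the resulting inequality is polynomial, since $L$ involves logarithms of the boundary distances; no reduction, no inequality, and no verification is supplied. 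The fallback extremal-configuration argument only establishes (correctly) that $F(z)=\widetilde{\varphi}^c_G(x,z)+\widetilde{\varphi}^c_G(z,y)-\widetilde{\varphi}^c_G(x,y)$ is positive as $z$ approaches $\partial G$ or infinity, which merely relocates the problem to interior critical points of $F$; showing $F\ge0$ at those critical points is exactly the content of the conjecture, and you give no argument there. Your closing observation that in the regime $c\to\infty$ the inequality degenerates toward the subadditivity of $|x-y|/L(d_G(x),d_G(y))$, which the paper's lemmas show fails in $\uhp^n$ and $\B^n$, is a good sanity check on where the difficulty lies, but identifying the ``technical heart'' is not the same as resolving it.

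In short: the individual observations you make (the failure of the term-by-term scheme, the boundary behaviour of $F$, the link between $|x-y|/L$ and hyperbolic distance on vertical segments) are accurate, but none of the three strategies is executed to completion, so the triangle inequality for $\widetilde{\varphi}^c_G$ in $\uhp^n$ and $\B^n$ remains unproven. This matches the status of the statement in the paper, where it is posed as an open conjecture; your text should be presented as a discussion of possible attack routes, not as a proof.
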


\section{Inequalities with the hyperbolic metric}

Next, we study the inequalities between the functions $\widetilde{\phi}^c_G$ and $\widetilde{\varphi}^c_G$ defined in \eqref{phi} and \eqref{varphi}, respectively, and the hyperbolic metric $\rho_G$ when $G=\uhp^n$ or $G=\B^n$.

\begin{remark}\label{r_dim}
In the domain $G\in\{\uhp^n,\B^n\}$, the distance between $x,y\in G$ measured by the hyperbolic metric or any of the mean value metrics introduced in this paper depends only on the distances $|x-y|$, $d_G(x)$, and $d_G(y)$. Given the geometry of the domain $G$, the vectors related to these three Euclidean distances are located in the same two-dimensional plane. Thus, our metrics would have the same values if their domain would be the intersection of the original domain $G$ and a two-dimensional plane that either contains the points $x,y,0$ if $G=\B^n$ or contains the points $x,y$ and is perpendicular against the boundary of $\uhp^n$ if $G=\uhp^n$. It follows that it is enough to consider the special case $n=2$ to study the inequalities between these metrics in any dimension $n\geq2$.  
\end{remark}

\begin{theorem}\label{thm_trh}
For all $c>0$ and $x,y\in\uhp^n$, the inequality
\begin{align*}
\frac{1}{1+c}{\rm th}\frac{\rho_{\uhp^n}(x,y)}{2}\leq\widetilde{\phi}^c_{\uhp^n}(x,y)\leq\max\left\{1,\frac{1}{c}\right\}{\rm th}\frac{\rho_{\uhp^n}(x,y)}{2}  \end{align*}
holds with the best possible constants in terms of $c$.
\end{theorem}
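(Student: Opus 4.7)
The plan is to reduce both inequalities to estimates on one scalar quantity. Set $u=|x-y|$, $v=d_G(x)+d_G(y)$, $w=2\sqrt{d_G(x)d_G(y)}$, and $T=\sqrt{u^2+w^2}$. Using the defining formula $\ch\rho_{\uhp^n}(x,y)=1+|x-y|^2/(2d_G(x)d_G(y))$ together with the identity $\th^2(\alpha/2)=(\ch\alpha-1)/(\ch\alpha+1)$, one obtains $\th(\rho_{\uhp^n}(x,y)/2)=u/T$ directly, valid in every dimension (so there is no genuine need to invoke Remark~\ref{r_dim}). Since $\widetilde{\phi}^c_{\uhp^n}(x,y)=u/(u+cv)$, the ratio $\widetilde{\phi}^c_{\uhp^n}/\th(\rho_{\uhp^n}/2)$ equals $T/(u+cv)$, and the two claimed inequalities collapse into the single double estimate
\[
\min\{1,c\}\,T\;\le\;u+cv\;\le\;(1+c)\,T.
\]

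I would then record three elementary facts about $u,v,w,T$: (a) $u\le T$ by construction; (b) $w\le v$ by AM--GM; and (c) $v\le T$, which follows from the Euclidean inequality $|d_G(x)-d_G(y)|\le|x-y|$ rewritten as $v^2-w^2\le u^2=T^2-w^2$. Adding $u\le T$ and $cv\le cT$ immediately yields the right-hand inequality, which is the lower bound on $\widetilde{\phi}^c_{\uhp^n}$.

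For the left-hand inequality I would split by the size of $c$. If $c\ge1$, expanding gives $(u+cv)^2-T^2=2cuv+c^2v^2-w^2\ge c^2v^2-w^2\ge v^2-w^2\ge0$, using $w\le v$ and $c\ge1$. If $c<1$, the analogous expansion yields $(u+cv)^2-c^2T^2=(1-c^2)u^2+2cuv+c^2(v^2-w^2)\ge0$, non-negative term by term. Together these show $u+cv\ge\min\{1,c\}\,T$, which is the upper bound on $\widetilde{\phi}^c_{\uhp^n}$.

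Sharpness calls for two one-parameter families. Taking $x=e_n$ and $y=te_n$ with $t\to0^+$, a short computation gives $u=1-t$, $v=1+t$, $T=1+t$, so the ratio $T/(u+cv)$ tends to $1/(1+c)$, pinning down the lower constant. Taking $x=e_n+te_1$ and $y=e_n$ instead gives $T/(u+cv)=\sqrt{t^2+4}/(t+2c)$, whose value at $t\to0^+$ is $1/c$ and whose limit as $t\to\infty$ is $1$, so the supremum along this family equals $\max\{1,1/c\}$. The only mildly subtle point in the whole argument is recognizing this phase transition: the extremal configuration for the upper constant shifts from ``$x\to y$'' to ``$|x-y|\to\infty$'' as $c$ crosses $1$. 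Everything else is routine algebra.
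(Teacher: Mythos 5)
Your proof is correct, and it takes a genuinely different route from the paper. You exploit the dimension-free identity $\mathrm{th}(\rho_{\uhp^n}(x,y)/2)=u/T$ with $u=|x-y|$, $w=2\sqrt{d(x)d(y)}$, $T=\sqrt{u^2+w^2}$ (which indeed makes the reduction to $n=2$ unnecessary), and then collapse both bounds into the algebraic double estimate $\min\{1,c\}\,T\le u+cv\le(1+c)T$, proved from three elementary facts: $u\le T$, $w\le v$ (AM--GM), and $v\le T$ (the $1$-Lipschitz property of $d_G$, via $v^2-w^2=(d(x)-d(y))^2\le u^2$), with a clean case split at $c=1$; I checked the expansions $(u+cv)^2-T^2=2cuv+c^2v^2-w^2$ and $(u+cv)^2-c^2T^2=(1-c^2)u^2+2cuv+c^2(v^2-w^2)$ and both are sound, as are your two extremal families ($x=e_n$, $y=te_n$, $t\to0^+$ giving $1/(1+c)$, and $x=e_n+te_1$, $y=e_n$ giving $1/c$ as $t\to0^+$ and $1$ as $t\to\infty$). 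The paper instead normalizes $y=i$, $x=k+hi$ in the plane and runs a calculus analysis of the quotient $\sqrt{k^2+(h+1)^2}/(\sqrt{k^2+(h-1)^2}+c(h+1))$, locating stationary points in $k$ and then optimizing in $h$; that route yields a finer picture of where the quotient attains its extremes (the interior minimum $2\sqrt{h/(4h+c^2(h+1)^2)}$, the boundary behaviour at $k=0$ and $k\to\infty$), but it is longer and relies on the dimension reduction of Remark~\ref{r_dim}. Your argument buys brevity, full generality in $n$ without any reduction step, and transparent sharpness, at the cost of the detailed extremal analysis; the same "phase transition" at $c=1$ that you flag appears in the paper as the competition between the $k\to0^+$ limit (value $1/c$) and the $k\to\infty$ limit (value $1$).
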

\begin{proof}
By Remark \ref{r_dim}, fix $n=2$. By symmetry, fix ${\rm Im}(x)\leq {\rm Im}(y)$. Since both the metrics $\widetilde{\phi}^c_{\uhp^n}(x,y)$ and $\rho_{\uhp^n}(x,y)$ are invariant under a stretching by a factor $r>0$, a translation in a direction parallel to the real axis, and a reflection over the imaginary axis, we can fix $y=i$ and ${\rm Re}(x)\geq0$ without loss of generality. Let us denote $x=k+hi$ where $k\geq0$ and $0<h\leq1$. We have now
\begin{align}\label{quo_trh}
\frac{\widetilde{\phi}^c_{\uhp^2}(x,y)}{{\rm th}(\rho_{\uhp^2}(x,y)/2)}=\frac{|x-\overline{y}|}{|x-y|+c({\rm Im}(x)+{\rm Im}(y))}=\frac{\sqrt{k^2+(h+1)^2}}{\sqrt{k^2+(h-1)^2}+c(h+1)}.    
\end{align}
By differentiation, 
\begin{align*}
&\frac{\partial}{\partial k}\left(\frac{\sqrt{k^2+(h+1)^2}}{\sqrt{k^2+(h-1)^2}+c(h+1)}\right)\\
&=\frac{k(c(h+1)\sqrt{k^2+(h-1)^2}-4h)}{\sqrt{k^2+(h-1)^2}\sqrt{k^2+(h+1)^2}(\sqrt{k^2+(h-1)^2}+c(h+1))^2}=0\\
&\Leftrightarrow\quad k^2=\frac{16h^2}{c^2(h+1)^2}-(h-1)^2.
\end{align*}
We can solve that, for $h\leq1$,
\begin{align*}
\frac{16h^2}{c^2(h+1)^2}-(h-1)^2\geq0  
\quad\Leftrightarrow\quad
ch^2+4h-c\geq0
\quad\Leftrightarrow\quad
h\geq\frac{-2+\sqrt{4+c^2}}{c}. 
\end{align*}
Here,
\begin{align*}
\frac{-2+\sqrt{4+c^2}}{c}<1
\quad\Leftrightarrow\quad
0<4c.
\end{align*} 

Suppose first that $0<h<(-2+\sqrt{4+c^2})/c$. Now, $16h^2/(c^2(h+1)^2)-(h-1)^2<0$ and the quotient \eqref{quo_trh} is therefore strictly increasing with respect to $k\geq0$. Its minimum is 
\begin{align}\label{quo_trh0}
\frac{h+1}{1-h+c(h+1)}    
\end{align}
at $k=0$ and its supremum is 1 at $k\to\infty$. By differentiation, we see that the quotient \eqref{quo_trh0} is increasing with respect to $h$ and has therefore an infimum $1/(1+c)$ at $h\to0^+$.

Consider then the case where $(-2+\sqrt{4+c^2})/c\leq h\leq1$. Now, the quotient \eqref{quo_trh} is at minimum when $k^2=16h^2/(c^2(h+1)^2)-(h-1)^2$. By inputting this value into the quotient \eqref{quo_trh}, we have
\begin{align}\label{quo_trh1}
2\sqrt{\frac{h}{4h+c^2(h+1)^2}}.    
\end{align}
By differentiation, we see that the quotient \eqref{quo_trh1} is increasing with respect to $h$. Its minimum value is obtained when $h=(-2+\sqrt{4+c^2})/c$ or, equivalently, $c=4h/(1-h^2)$. If these equalities hold, the quotient \eqref{quo_trh1} becomes  
\begin{align*}
\frac{1-h}{1+h}
=\frac{c+2-\sqrt{4+c^2}}{c-2+\sqrt{4+c^2}}. 
\end{align*}
However, we can solve that, for all $c>0$, the quotient above is greater than the infimum $1/(1+c)$ found earlier. Since the minimum \eqref{quo_trh1} was the only stationary point of the quotient \eqref{quo_trh}, the supremum is either at $k\to0^+$ or at $k\to\infty$. If $k\to0^+$, the quotient \eqref{quo_trh} becomes the quotient \eqref{quo_trh0}, which is increasing with respect to $h$ and has a maximum $1/c$ at $h=1$. If $k\to\infty$, the quotient \eqref{quo_trh} approaches 1, as noted already earlier. The theorem follows from the found extreme values and their limit values of the quotient \eqref{quo_trh}.
\end{proof}

\begin{theorem}\label{thm_trb}
For all $c>0$ and $x,y\in\B^n$, the inequality
\begin{align*}
\min\left\{\frac{1}{2c},\frac{1}{1+c}\right\}{\rm th}\frac{\rho_{\B^n}(x,y)}{2}\leq\widetilde{\phi}^c_{\B^n}(x,y)\leq\max\left\{1,\frac{1}{c}\right\}{\rm th}\frac{\rho_{\B^n}(x,y)}{2}  \end{align*}
holds with the best possible constants in terms of $c$.
\end{theorem}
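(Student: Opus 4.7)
The plan is to adapt the approach used in the proof of Theorem~\ref{thm_trh}. By Remark~\ref{r_dim}, it suffices to treat $n=2$. Using rotational symmetry together with reflection across the line through $0$ and $y$, I may assume $y=s\in[0,1)$ and $x=a+bi$ with $b\ge 0$; since both sides of the inequality depend only on $|x|$, $|y|$, and $|x-y|$, this involves no loss of generality. Writing $u=|x-y|$, $v=|x|$, $w=|y|$, the identity
\[
|1-x\bar y|^{2}=|x-y|^{2}+(1-|x|^{2})(1-|y|^{2})
\]
yields the convenient form
\[
\mathcal{Q}(u,v,w):=\frac{\widetilde{\phi}^{c}_{\B^{2}}(x,y)}{{\rm th}(\rho_{\B^{2}}(x,y)/2)}=\frac{\sqrt{u^{2}+(1-v^{2})(1-w^{2})}}{u+c(2-v-w)},
\]
subject to $|v-w|\le u\le v+w$ and $v,w\in[0,1)$.

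For fixed $v,w$ I would differentiate $\mathcal{Q}$ in $u$. The numerator of $\partial_{u}\mathcal{Q}$ is the affine function $u\cdot c(2-v-w)-(1-v^{2})(1-w^{2})$, which changes sign from negative to positive at
\[
u_{0}:=\frac{(1-v^{2})(1-w^{2})}{c(2-v-w)}.
\]
Hence, whenever $u_{0}\in[|v-w|,v+w]$, $\mathcal{Q}$ attains a local minimum there with value $\mathcal{Q}_{0}=\sqrt{u_{0}/(u_{0}+c(2-v-w))}$; the maximum in $u$ is always at one of the endpoints $u=v+w$ or $u=|v-w|$, corresponding geometrically to $x$ and $y$ being diametrically opposite through $0$ or lying on a common ray from $0$. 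Evaluating at these endpoints gives
\[
\mathcal{Q}_{A}(v,w)=\frac{1+vw}{(v+w)+c(2-v-w)},\qquad \mathcal{Q}_{B}(v,w)=\frac{1-vw}{|v-w|+c(2-v-w)}.
\]

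Having reduced to an optimisation over $v,w\in[0,1)$, I would identify the extremes from the boundary behaviour. For the upper bound, $v=w\to1^{-}$ in $\mathcal{Q}_{B}$ gives $(1+v)/(2c)\to 1/c$, and $v,w\to1^{-}$ in $\mathcal{Q}_{A}$ gives $2/2=1$, producing the two candidates in $\max\{1,1/c\}$; a direct algebraic check then confirms $\mathcal{Q}_{A},\mathcal{Q}_{B}\le\max\{1,1/c\}$ throughout the admissible region. For the lower bound, $v\to1^{-}$ with any fixed $w$ in $\mathcal{Q}_{B}$ yields $1/(1+c)$, and $v,w\to0^{+}$ yields $1/(2c)$, giving the two candidates in $\min\{1/(2c),1/(1+c)\}$; these limits are witnessed by admissible sequences, so sharpness follows immediately.

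The main obstacle will be the lower-bound bookkeeping: one has to verify that the interior critical value $\mathcal{Q}_{0}$ never drops below $\min\{1/(2c),1/(1+c)\}$ throughout the subregion where $u_{0}$ is feasible, and likewise for $\mathcal{Q}_{A}$ and $\mathcal{Q}_{B}$ on the full rectangle $[0,1)^{2}$. This forces a case split on whether $c\le 1$ or $c\ge 1$, because the binding candidate in the minimum changes at $c=1$. The cleanest route is to write $\mathcal{Q}_{0}^{2}$ as an explicit rational function of $v,w$ and show it dominates the claimed constant, paralleling the single-parameter calculation in Theorem~\ref{thm_trh}; the upper-bound analysis is analogous but lighter since only two limit values compete.
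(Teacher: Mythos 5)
Your proposal is correct and follows essentially the same route as the paper: after the reduction to $n=2$ and the rotational normalization, your change of variables $u=|x-y|$ (in place of the paper's $\cos k$) reproduces exactly the paper's interior critical value \eqref{quo_cmin} and the endpoint quotients \eqref{quo_trb0} and \eqref{quo_trb1}, and the limits $1/(2c)$, $1/(1+c)$, $1$, $1/c$ you extract are the same extreme values the paper identifies, which also gives sharpness. The computations you defer --- verifying $\mathcal{Q}_0,\mathcal{Q}_A,\mathcal{Q}_B\ge\min\{1/(2c),1/(1+c)\}$ and $\mathcal{Q}_A,\mathcal{Q}_B\le\max\{1,1/c\}$ with a case split at $c=1$ --- are precisely the algebraic case analysis the paper carries out, so no essential idea is missing.
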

\begin{proof}
By Remark \ref{r_dim}, we can assume that $n=2$. By symmetry, we can assume that $|y|\leq|x|$. Since $\widetilde{\phi}^c_{\B^n}(x,y)$ and $\rho_{\B^n}(x,y)$ are invariant under a rotation around the origin, we can assume that $x\in[0,1)$ without loss of generality. Denote $y=|y|e^{ki}$ for $0\leq k<2\pi$. By the law of cosines, we have
\begin{align}\label{quo_trb}
\frac{\widetilde{\phi}^c_{\B^2}(x,y)}{{\rm th}(\rho_{\B^2}(x,y)/2)}=\frac{|1-x\overline{y}|}{|x-y|+c(2-|x|-|y|)}
=\frac{\sqrt{1+|x|^2|y|^2-2|x||y|\cos(k)}}{\sqrt{|x|^2+|y|^2-2|x||y|\cos(k)}+c(2-|x|-|y|)}.    
\end{align}
By differentiation,
\begin{align*}
&\frac{\partial}{\partial\cos(k)}\left(\frac{\sqrt{1+|x|^2|y|^2-2|x||y|\cos(k)}}{\sqrt{|x|^2+|y|^2-2|x||y|\cos(k)}+c(2-|x|-|y|)}\right)\geq0\\
&\quad\Leftrightarrow\quad(1-|x|^2)(1-|y|^2)-c(2-|x|-|y|)\sqrt{|x|^2+|y|^2-2|x||y|\cos(k)}\geq0\\
&\quad\Leftrightarrow\quad\cos(k)\geq\frac{1}{2|x||y|}\left(|x|^2+|y|^2-\frac{(1-|x|^2)^2(1-|y|^2)^2}{c^2(2-|x|-|y|)^2}\right)
\end{align*}
Consequently, we see that the quotient \eqref{quo_trb} has only one possible stationary point with respect to $\cos(k)$ and it is a minimum. At this stationary point, the value of the quotient \eqref{quo_trb} is
\begin{align}\label{quo_cmin}
\sqrt{\frac{(1-|x|^2)(1-|y|^2)}{(1-|x|^2)(1-|y|^2)+c^2(2-|x|-|y|)^2}}.
\end{align}
However, the quotient \eqref{quo_trb} can obtain this minimum only if the stationary point found by differentiation with respect to $\cos(k)$ is on $[-1,1]$ or, equivalently,
\begin{align}\label{ine_trb}
\frac{(1-|x|^2)(1-|y|^2)}{(|x|+|y|)(2-|x|-|y|)}\leq c\leq\frac{(1-|x|^2)(1-|y|^2)}{(|x|-|y|)(2-|x|-|y|)}.
\end{align}

Let us show that the quotient \eqref{quo_cmin} is always greater than or equal to $\min\{1/(2c),1/(1+c)\}$ when the inequality \eqref{ine_trb} holds. Suppose first that $c\leq1$. This is possible within bounds of \eqref{ine_trb} if and only if
\begin{align}\label{ine_trb0}
\frac{(1-|x|^2)(1-|y|^2)}{(|x|+|y|)(2-|x|-|y|)}\leq1
\quad\Leftrightarrow\quad
(1+|x||y|)^2\leq2(|x|+|y|).
\end{align}
We want to show that
\begin{align*}
\frac{1}{1+c}\leq\sqrt{\frac{(1-|x|^2)(1-|y|^2)}{(1-|x|^2)(1-|y|^2)+c^2(2-|x|-|y|)^2}}
\quad\Leftrightarrow\quad
c\left(1-\frac{(2-|x|-|y|)^2}{(1-|x|^2)(1-|y|^2)}\right)\geq-2.
\end{align*}
Since $(1-|x|^2)(1-|y|^2)<(2-|x|-|y|)^2$, it is enough to prove the latter inequality above holds for the smallest value of $c$ to show it holds for all possible values of $c\leq1$. By inputting the lower bound of $c$ from \eqref{ine_trb}, we have
\begin{align*}
&\frac{(1-|x|^2)(1-|y|^2)}{(|x|+|y|)(2-|x|-|y|)}\left(1-\frac{(2-|x|-|y|)^2}{(1-|x|^2)(1-|y|^2)}\right)\geq-2\\
&\Leftrightarrow\quad
|x||y|\geq1-2|x|-2|y|.
\end{align*}
It can be shown that the inequality above holds for all $|x|,|y|\in[0,1)$ such that $(1+|x||y|)^2\leq2(|x|+|y|)$.

Let us then consider the case $c>1$. By the inequality \eqref{ine_trb}, this is possible only if
\begin{align}
\frac{(1-|x|^2)(1-|y|^2)}{(|x|-|y|)(2-|x|-|y|)}\geq1
\quad\Leftrightarrow\quad
1-2|y|^2+|x|^2|y|^2\geq2(|x|-|y|).
\end{align}
Our inequality is
\begin{align*}
\frac{1}{2c}\leq\sqrt{\frac{(1-|x|^2)(1-|y|^2)}{(1-|x|^2)(1-|y|^2)+c^2(2-|x|-|y|)^2}}
\quad\Leftrightarrow\quad
c^2\left(4-\frac{(2-|x|-|y|)^2}{(1-|x|^2)(1-|y|^2)}\right)\geq1,
\end{align*}
where $4(1-|x|^2)(1-|y|^2)>(2-|x|-|y|)^2$ for all $0\leq|y|\leq|x|<1$ such that $1-2|y|^2+|x|^2|y|^2\geq2(|x|-|y|)$. Consequently, we need to prove this inequality only for the smallest value of $c$. Since we set $c>1$ and the inequality \eqref{ine_trb} gives a lower bound for $c$, the inequality becomes 
\begin{align*}
&\max\left\{1,\frac{(1-|x|^2)(1-|y|^2)}{(|x|+|y|)(2-|x|-|y|)}\right\}^2\left(4-\frac{(2-|x|-|y|)^2}{(1-|x|^2)(1-|y|^2)}\right)\geq1\\
&\Leftrightarrow\quad
\begin{cases}
(2-|x|-|y|)^2\leq3(1-|x|^2)(1-|y|^2),&\quad\text{if \eqref{ine_trb0} holds,}\\
(1+|x||y|)(2-|x|-|y|)\leq4(1-|x|^2)(1-|y|^2),&\quad\text{if \eqref{ine_trb0} does not hold.}
\end{cases}
\end{align*}
Both of these two inequalities above can be proven to hold for all such $0\leq|y|\leq|x|<1$ that satisfy $1-2|y|^2+|x|^2|y|^2\geq2(|x|-|y|)$ and for which the inequality \eqref{ine_trb0} either holds or does not, as specified above. It follows from this that the found minimum of the quotient \eqref{quo_cmin} is never below $\min\{1/(2c),1/(1+c)\}$.

Let us then study the values of the quotient \eqref{quo_trb} when $\cos(k)=-1$ or $\cos(k)=1$. Suppose first that $\cos(k)=-1$. Now, the quotient \eqref{quo_trb} becomes
\begin{align}\label{quo_trb0}
\frac{1+|x||y|}{|x|+|y|+c(2-|x|-|y|)}.    
\end{align}
By differentiation,
\begin{align}
\frac{\partial}{\partial|y|}\left(\frac{1+|x||y|}{|x|+|y|+c(2-|x|-|y|)}\right)=\frac{-1+|x|^2+c(1+2|x|-|x|^2)}{(|x|+|y|+c(2-|x|-|y|))^2}.   
\end{align}
Consequently, the quotient \eqref{quo_trb0} is monotonic with respect to $|y|$ and obtains its extreme values when $|y|=0$ or $|y|=|x|$. At $|y|=0$, the quotient \eqref{quo_trb0} is $1/(|x|+c(2-|x|))$, whose infimum and supremum with respect to $|x|$ are $1/(2c)$ at $|x|\to0^+$ and $1/(1+c)$ at $|x|\to1^-$ or vice versa, depending if $c\leq1$ or not. If $|y|=|x|$,  the quotient \eqref{quo_trb0} becomes
\begin{align}\label{quo_trb01}
\frac{1+|x|^2}{2(|x|+c(1-|x|))}.    
\end{align}
By differentiation,
\begin{align*}
\frac{\partial}{\partial|x|}\left(\frac{1+|x|^2}{2(|x|+c(1-|x|))}\right)=\frac{(1-c)|x|^2-1+2c|x|-1+c}{2(|x|+c(1-|x|))^2}&=0\\
\Leftrightarrow\quad|x|&=\frac{-c\pm\sqrt{2c^2-2c+1}}{1-c}
\end{align*}
The root with a minus sign in place of $\pm$ does not belong in $(0,1)$ for any values of $c$. The other root belongs on $(0,1)$ if and only if $c<1$. Consequently, if $c<1$, the minimum of the quotient \eqref{quo_trb01} is $(\sqrt{2c^2-2c+1}-c)/(1-c)^2$ at $|x|=(-c+\sqrt{2c^2-2c+1})/(1-c)$. However, since for all $0<c<1$, we have
\begin{align*}
\frac{\sqrt{2c^2-2c+1}-c}{(1-c)^2}<\frac{1}{1+c}
\quad\Leftrightarrow\quad
(1-c)^3>0,
\end{align*}
this minimum is never the smallest possible value of the quotient \eqref{quo_trb}. The quotient \eqref{quo_trb01} becomes $1/(2c)$ when $x\to0^+$ and 1 when $x\to1^-$.

If we fix $\cos(k)=1$, the quotient \eqref{quo_trb} becomes
\begin{align}\label{quo_trb1}
\frac{1-|x||y|}{|x|-|y|+c(2-|x|-|y|)}.    
\end{align}
By differentiation,
\begin{align*}
\frac{\partial}{\partial|y|}\left(\frac{1-|x||y|}{|x|-|y|+c(2-|x|-|y|)}\right)=\frac{(1-|x|)(1+|x|+c(1-|x|))}{(|x|-|y|+c(2-|x|-|y|))^2}>0,    
\end{align*}
so the quotient \eqref{quo_trb1} is increasing with respect to $|y|$. It has a minimum $1/(|x|+c(2-|x|)$ at $|y|=0$ and the infimum of this minimum with respect to $|x|$ is either $1/(2c)$ at $x\to0^+$ or $1/(1+c)$ at $x\to1^-$. The maximum of the quotient \eqref{quo_trb1} with respect to $|y|$ is $(1+|x|)/(2c)$ at $|y|=|x|$ and the supremum of this value with respect to $|x|$ is $1/c$ at $|x|\to1^-$.
\end{proof}

\begin{lemma}\label{lem_hit}
For all $c>0$ and points $x,y$ in a domain $G\subsetneq\R^n$, the inequality $\widetilde{\varphi}^c_G(x,y)\geq \widetilde{\phi}^c_G(x,y)$.    
\end{lemma}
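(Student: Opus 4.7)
The plan is to reduce the inequality to the classical arithmetic–logarithmic mean inequality. First I observe that both functions share the same numerator $|x-y|\geq 0$ and have the shape $|x-y|/(|x-y|+S)$ for some positive quantity $S$ (namely $S = c(d_G(x)+d_G(y))$ for $\widetilde{\phi}^c_G$ and $S = 2cL(d_G(x),d_G(y))$ for $\widetilde{\varphi}^c_G$). Since $t\mapsto t/(t+S)$ is increasing in $t$ and the map $S\mapsto |x-y|/(|x-y|+S)$ is decreasing in $S>0$, the desired inequality
\begin{align*}
\widetilde{\varphi}^c_G(x,y)\geq \widetilde{\phi}^c_G(x,y)
\end{align*}
is equivalent (after cancelling the trivial case $x=y$) to
\begin{align*}
2cL(d_G(x),d_G(y)) \leq c(d_G(x)+d_G(y)),
\end{align*}
i.e.\ to $L(a,b)\leq (a+b)/2$ with $a=d_G(x)$ and $b=d_G(y)$.

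Second, I would invoke the classical inequality between the logarithmic and arithmetic means: for all $a,b>0$,
\begin{align*}
L(a,b)=\frac{a-b}{\log a-\log b}\leq \frac{a+b}{2},
\end{align*}
with equality if and only if $a=b$. (If the paper prefers a self-contained argument, one can prove this by setting $b=ta$ with $t>0$ and checking that $f(t)=(t+1)(\log t)/2-(t-1)\geq 0$ on $(0,\infty)$ by computing $f(1)=0$, $f'(1)=0$, and observing $f''(t)=(t-1)/(2t^2)$ has the right sign, so $t=1$ is a global minimum of $f$.) Substituting $a=d_G(x)$, $b=d_G(y)$ and multiplying by $2c>0$ yields the required inequality on the denominators, which gives the claim.

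The argument is elementary; there is no real obstacle beyond citing or reproving the standard AM--LM inequality. Note that equality in the conclusion holds precisely when $d_G(x)=d_G(y)$ (or when $x=y$).
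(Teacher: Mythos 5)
Your argument is correct and is essentially the paper's proof, which simply cites the arithmetic--logarithmic mean inequality $L(a,b)\le(a+b)/2$ and the fact that decreasing the denominator increases the quotient. One small caveat in your optional self-contained verification: $f(t)=(t+1)(\log t)/2-(t-1)$ is nonnegative only for $t\ge1$ (on $(0,1)$ one has $f(t)\le0$, which is still what is needed since multiplying by $\log t<0$ reverses the inequality), and $t=1$ is a global minimum of $f'$ rather than of $f$; the cleanest fix is to assume $t>1$ without loss of generality by the symmetry of $L$ and of the arithmetic mean.
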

\begin{proof}
Follows from the fact that $L(x,y)\leq(x+y)/2$ for all $x,y>0$.    
\end{proof}

\begin{theorem}\label{thm_hit}
For all $c>0$ and $x,y\in\uhp^n$, the inequality
\begin{align*}
\frac{1}{1+c}{\rm th}\frac{\rho_{\uhp^n}(x,y)}{2}\leq\widetilde{\varphi}^c_{\uhp^n}(x,y)\leq\max\left\{1,\frac{1}{c}\right\}{\rm th}\frac{\rho_{\uhp^n}(x,y)}{2}  \end{align*}
holds and $\max\{1,1/c\}$ is the best possible constant here in terms of $c$.
\end{theorem}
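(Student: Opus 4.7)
The lower bound is essentially free: by Lemma \ref{lem_hit} we have $\widetilde{\varphi}^c_{\uhp^n}(x,y) \geq \widetilde{\phi}^c_{\uhp^n}(x,y)$, and Theorem \ref{thm_trh} already gives $\widetilde{\phi}^c_{\uhp^n}(x,y) \geq \frac{1}{1+c}{\rm th}(\rho_{\uhp^n}(x,y)/2)$. The real content lies in the upper bound together with the sharpness claim.

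For the upper bound, the key move is to apply the classical sandwich $\sqrt{ab} \leq L(a,b) \leq (a+b)/2$ in the opposite direction from Lemma \ref{lem_hit}. Taking $a = d_{\uhp^n}(x)$, $b = d_{\uhp^n}(y)$, the inequality $L(a,b) \geq \sqrt{ab}$ enlarges $\widetilde{\varphi}^c_{\uhp^n}$:
\[
\widetilde{\varphi}^c_{\uhp^n}(x,y) \leq \frac{|x-y|}{|x-y| + 2c\sqrt{d_{\uhp^n}(x)\,d_{\uhp^n}(y)}}.
\]
Substituting the identity $2\sqrt{d_{\uhp^n}(x)d_{\uhp^n}(y)} = |x-y|/{\rm sh}(\rho_{\uhp^n}(x,y)/2)$ coming from the definition of $\rho_{\uhp^n}$ in the Preliminaries, and writing $s = {\rm sh}(\rho_{\uhp^n}(x,y)/2)$, this collapses to the clean one-variable bound $\widetilde{\varphi}^c_{\uhp^n}(x,y) \leq s/(s+c)$.

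Since ${\rm th}(\rho_{\uhp^n}(x,y)/2) = s/\sqrt{1+s^2}$, the desired upper estimate reduces, after dividing by $s > 0$, to showing $\sqrt{1+s^2} \leq \max\{1, 1/c\}(s+c)$. Squaring and rearranging, this becomes $1 \leq 2cs + c^2$ when $c \geq 1$ (immediate from $c^2 \geq 1$) and $(1-c^2)s^2 + 2cs \geq 0$ when $c < 1$ (immediate since $1 - c^2 > 0$ and $s \geq 0$). So the upper bound holds in both regimes.

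For sharpness of $\max\{1, 1/c\}$, by Remark \ref{r_dim} it suffices to work in $\uhp^2$. Take $y = i$ and $x = k + i$, so that $d_{\uhp^2}(x) = d_{\uhp^2}(y) = 1$ and $L(1,1) = 1$; a direct calculation gives
\[
\frac{\widetilde{\varphi}^c_{\uhp^2}(x,y)}{{\rm th}(\rho_{\uhp^2}(x,y)/2)} = \frac{\sqrt{k^2+4}}{k+2c},
\]
which tends to $1$ as $k \to \infty$ (the sharp constant when $c \geq 1$) and to $1/c$ as $k \to 0^+$ (the sharp constant when $c < 1$). The main subtlety is noticing that the inequality $L \geq \sqrt{\cdot}$, while strict whenever $d_{\uhp^n}(x) \neq d_{\uhp^n}(y)$, is nevertheless tight along precisely this equidistant family, so no finer two-variable extremization in the spirit of the proof of Theorem \ref{thm_trh} is required.
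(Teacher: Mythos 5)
Your proof is correct, and for the upper bound it takes a genuinely different and cleaner route than the paper's. The paper proves the lower bound exactly as you do (Lemma \ref{lem_hit} combined with Theorem \ref{thm_trh}), but for the upper bound it reduces to $n=2$, writes the quotient $\widetilde{\varphi}^c_{\uhp^2}(x,y)/{\rm th}(\rho_{\uhp^2}(x,y)/2)$ in coordinates $y=i$, $x=k+hi$, and performs a two-variable extremization: differentiation in $k$, a stationary-point analysis, and then separate monotonicity arguments in $h$ and $c$ for the boundary cases $k\to0^+$ and $k\to\infty$. You instead use the geometric--logarithmic mean inequality $L(a,b)\ge\sqrt{ab}$ together with the exact identity ${\rm sh}(\rho_{\uhp^n}(x,y)/2)=|x-y|/(2\sqrt{d_{\uhp^n}(x)d_{\uhp^n}(y)})$ (which indeed follows from ${\rm ch}\,\rho=1+2\,{\rm sh}^2(\rho/2)$ and the paper's formula for ${\rm ch}\,\rho_{\uhp^n}$), reducing the claim to the elementary one-variable inequality $\sqrt{1+s^2}\le\max\{1,1/c\}(s+c)$ for $s={\rm sh}(\rho_{\uhp^n}(x,y)/2)\ge0$, which you verify correctly in both regimes. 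This buys brevity, avoids the delicate stationary-point bookkeeping, works directly in all dimensions without invoking Remark \ref{r_dim} except for the sharpness example, and makes transparent where equality is approached: your equidistant family $x=k+i$, $y=i$ (the case $h=1$ of the paper's parametrization \eqref{quo_phih}) gives the ratio $\sqrt{k^2+4}/(k+2c)$, recovering the paper's limiting constants $1$ (as $k\to\infty$) and $1/c$ (which the paper instead reaches via $k\to0^+$ followed by $h\to1^-$), so the sharpness claim is fully justified. What the paper's computation buys in exchange is the explicit behaviour of the quotient for all $h$, in the same style as its companion Theorems \ref{thm_trh} and \ref{thm_hit1}; your argument, like the paper's own Lemma \ref{lem_hit}, leaves the classical mean inequality $\sqrt{ab}\le L(a,b)\le(a+b)/2$ uncited, which is harmless but would merit a reference.
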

\begin{proof}
The first part of the inequality follows directly from Theorem \ref{thm_trh} and Lemma \ref{lem_hit}. To prove the second part of the inequality, we need to find the maximum of the quotient $\widetilde{\varphi}^c_{\uhp^n}(x,y)/{\rm th}(\rho_{\uhp^n}(x,y)/2)$. By using the same reasoning as in the proof of Theorem \ref{thm_trh}, we can assume that $n=2$, $y=i$ and $x=k+hi$ with $k>0$ and $0<h\leq1$ without loss of generality. Now,
\begin{align}\label{quo_phih}
\frac{\widetilde{\varphi}^c_{\uhp^2}(x,y)}{{\rm th}(\rho_{\uhp^2}(x,y)/2)}=\frac{|x-\overline{y}|}{|x-y|+2cL({\rm Im}(x),{\rm Im}(y))}=\frac{\sqrt{k^2+(h+1)^2}}{\sqrt{k^2+(h-1)^2}+2c(1-h)/(-\log h)}.    
\end{align}
By differentiation, 
\begin{align*}
&\frac{\partial}{\partial k}\left(\frac{\sqrt{k^2+(h+1)^2}}{\sqrt{k^2+(h-1)^2}+2c(1-h)/(-\log h)}\right)\\
&=\frac{2k(c(1-h)\sqrt{k^2+(h-1)^2}+2h\log h)}{(-\log h)\sqrt{k^2+(h-1)^2}\sqrt{k^2+(h+1)^2}(\sqrt{k^2+(h-1)^2}+2c(1-h)/(-\log h)^2}=0\\
&\Leftrightarrow\quad k^2=\frac{4h^2\log^2h}{c^2(1-h)^2}-(1-h)^2.
\end{align*}
We see that the quotient \eqref{quo_phih} has only one possible stationary point with respect to $k>0$ and this point is a minimum. Consequently, the maximum is found when $k\to0^+$ or $k\to\infty$. If $k\to\infty$, the quotient \eqref{quo_phih} approaches 1. If $k\to0^+$ instead, the quotient \eqref{quo_phih} becomes
\begin{align*}
\frac{h+1}{(1-h)(1-2c/\log h)}.    
\end{align*}
Let us now prove that the quotient above is less than $\max\{1,1/c\}$. If $c\leq1$, the inequality is
\begin{align}\label{ine_phiq}
\frac{h+1}{(1-h)(1-2c/\log h)}\leq\frac{1}{c}
\quad\Leftrightarrow\quad
c(1+h)+2c(1-h)/\log h-(1-h)\leq0.
\end{align}
Since the expression $c(1+h)+2c(1-h)/\log h-(1-h)$ is clearly monotonic with respect to $0<c\leq1$, the inequality \eqref{ine_phiq} for all possible values of $c\in(0,1]$ follows from the inequalities
\begin{align*}
&\lim_{c\to0^+}(c(1+h)+2c(1-h)/(\log h)-(1-h))=-1+h\leq0,\\
&(1+h)+2(1-h)/(\log h)-(1-h)=2(h\log h-h+1)/\log h\leq0,
\end{align*}
which clearly holds for all $0<h\leq1$. If $c>1$, we have
\begin{align*}
&\frac{h+1}{(1-h)(1-2c/\log h)}\leq1
\quad\Leftrightarrow\quad
2h-2c(1-h)/\log h\leq0\\
&\Rightarrow\quad
2h-2(1-h)/\log h=2(h\log-h+1)/\log h\leq0.
\end{align*}
Because the quotient \eqref{quo_phih} approaches 1 as $k\to\infty$ and
\begin{align*}
\lim_{h\to1^-}\frac{h+1}{(1-h)(1-2c/\log h)}=\frac{1}{c},   \end{align*}
the bound $\max\{1,1/c\}$ is the best possible one in the theorem.
\end{proof}

\begin{theorem}\label{thm_hit1}
For all $c>0$ and $x,y\in\B^n$, the inequality
\begin{align*}
\min\left\{\frac{1}{2c},\frac{1}{1+c}\right\}{\rm th}\frac{\rho_{\B^n}(x,y)}{2}\leq\widetilde{\varphi}^c_{\B^n}(x,y)\leq\max\left\{1,\frac{1}{c}\right\}{\rm th}\frac{\rho_{\B^n}(x,y)}{2}  \end{align*}
holds and $\max\{1,1/c\}$ is the best possible constant here in terms of $c$.
\end{theorem}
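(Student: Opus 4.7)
The lower bound is immediate by transitivity: Lemma~\ref{lem_hit} yields $\widetilde{\phi}^c_{\B^n}(x,y)\leq\widetilde{\varphi}^c_{\B^n}(x,y)$, and Theorem~\ref{thm_trb} already bounds $\widetilde{\phi}^c_{\B^n}$ from below by $\min\{1/(2c),1/(1+c)\}\,{\rm th}(\rho_{\B^n}(x,y)/2)$. Hence the left-hand inequality requires no further work.

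For the upper bound I would proceed in parallel with the proof of Theorem~\ref{thm_trb}. By Remark~\ref{r_dim} it suffices to consider $n=2$, and by rotational invariance of $\widetilde{\varphi}^c_{\B^n}$ and $\rho_{\B^n}$ I normalize $x=r\in[0,1)$ and $y=se^{ik}$ with $0\leq s\leq r$. The quotient to analyse is
$$R(r,s,k)=\frac{\sqrt{1+r^2s^2-2rs\cos k}}{\sqrt{r^2+s^2-2rs\cos k}+2cL(1-r,1-s)}.$$
A direct differentiation shows that $\partial R/\partial\cos k$ has the same sign as $(1-r^2)(1-s^2)-2cL(1-r,1-s)\sqrt{r^2+s^2-2rs\cos k}$, which is strictly increasing in $\cos k$; hence $R$ is first decreasing then increasing in $\cos k$, and its supremum over $k$ is attained at $\cos k=\pm 1$.

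In these two boundary configurations the ratio reduces to
$$R_{\pm}(r,s)=\frac{1\pm rs}{(r\mp s)+2cL(1-r,1-s)}.$$
Along the diagonal $s=r$, since $L(1-r,1-r)=1-r$, one obtains $R_{+}(r,r)=(1+r)/(2c)\to 1/c$ and $R_{-}(r,r)=(1+r^2)/(2(r+c(1-r)))\to 1$ as $r\to 1^{-}$, so these two limits already show that the constant $\max\{1,1/c\}$ cannot be improved. To prove that it is in fact an upper bound, I plan to split into the cases $c\leq 1$ and $c>1$, as in Theorem~\ref{thm_hit}, and to use the sandwich $\sqrt{(1-r)(1-s)}\leq L(1-r,1-s)\leq (2-r-s)/2$ to replace the transcendental denominator by an elementary expression in whichever direction strengthens the inequality; the remaining estimates on $R_{\pm}$ then reduce to polynomial inequalities in $r,s$ that can be treated by the monotonicity arguments already used in Theorem~\ref{thm_trb}. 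The main obstacle is precisely this interior estimate: because $L$ is transcendental, no closed-form critical point of $R_{\pm}$ is available in $(r,s)$, so one must argue by sandwiching between the geometric and arithmetic means rather than by direct differentiation.
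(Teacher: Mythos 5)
Your reduction steps are correct and coincide with the paper's: the lower bound follows from Lemma~\ref{lem_hit} and Theorem~\ref{thm_trb}; the normalization via Remark~\ref{r_dim} and rotation is the same; and your sign analysis of $\partial R/\partial\cos k$ (the sign of $(1-r^2)(1-s^2)-2cL(1-r,1-s)\sqrt{r^2+s^2-2rs\cos k}$, increasing in $\cos k$) is exactly the paper's argument that the only interior stationary point in $\cos k$ is a minimum, so the supremum sits at $\cos k=\pm1$; the diagonal limits $(1+r)/(2c)\to1/c$ and $(1+r^2)/(2(r+c(1-r)))\to1$ also give sharpness as in the paper. (Minor slip: your displayed $R_\pm$ pairs the signs wrongly — it should read $(1\mp rs)/\bigl((r\mp s)+2cL(1-r,1-s)\bigr)$ at $\cos k=\pm1$ — but your diagonal evaluations use the correct pairing, so this is only a typo.) The genuine gap is that the heart of the theorem, namely $R_\pm(r,s)\le\max\{1,1/c\}$ for all $0\le s\le r<1$, is never proved: you describe a plan (``I plan to split into the cases \dots reduce to polynomial inequalities \dots can be treated by the monotonicity arguments'') and yourself flag this interior estimate as the main obstacle. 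The paper closes precisely this step by a further monotonicity argument: with $|x|$ fixed and $f(1-|y|)=2cL(1-|x|,1-|y|)$, the concavity of $f$ (i.e.\ $f''<0$) makes the numerator of $\partial/\partial|y|$ of the quotient \eqref{quo_y1} increasing in $|y|$, so any stationary point is a minimum and the maximum occurs at $|y|=0$ or $|y|=|x|$; this leaves three one-variable quotients whose maxima are found by elementary calculus to be among $1$, $1/(2c)$, $1/c$.

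That said, your proposed sandwich route does work and, once executed, is even shorter than the paper's endpoint reduction; the point is that only the lower half $L(1-r,1-s)\ge\sqrt{(1-r)(1-s)}$ is needed. Writing $u=1-r$, $v=1-s$ with $0<u\le v\le1$, the claim $R_\pm\le\max\{1,1/c\}$ becomes, after multiplying out,
\begin{align*}
c(1+rs)\le r+s+2cL\ \text{ and }\ c(1-rs)\le (r-s)+2cL\quad(c\le1),\qquad
1\pm rs\le (r\pm s)+2cL\quad(c\ge1),
\end{align*}
and with $L\ge\sqrt{uv}$ all four reduce to the two elementary inequalities $uv\le2\sqrt{uv}$ and $u(2-v)\le2\sqrt{uv}$ (the latter since $u\le v$ gives $u(2-v)\le\sqrt{uv}\,(2-v)\le2\sqrt{uv}$), with the $c\le1$ cases handled by splitting off $c(r+s)\le r+s$, respectively by noting that if $u+v-uv-2\sqrt{uv}>0$ then multiplying it by $c<1$ only helps. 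So the approach is viable and genuinely different from the paper's in its final step, but as submitted the proof is incomplete: the decisive upper-bound estimate is asserted as a programme rather than carried out.
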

\begin{proof}
The first part of the inequality follows from Theorem \ref{thm_trb} and Lemma \ref{lem_hit}. For the second part, we need the maximum of the quotient $\widetilde{\varphi}^c_{\B^n}(x,y)/{\rm th}(\rho_{\B^n}(x,y)/2)$. By using the same reasoning as in the proof of Theorem \ref{thm_trb}, we can assume that $n=2$, $x\in[0,1)$ and $y=|y|e^{ki}$ with $0\leq|y|\leq|x|<1$ and $0\leq k<2\pi$ without loss of generality. By the law of cosines,
\begin{align}\label{quo_phi1}
\frac{\widetilde{\varphi}^c_{\B^2}(x,y)}{{\rm th}(\rho_{\B^2}(x,y)/2)}=\frac{\sqrt{1+|x|^2|y|^2-2|x||y|\cos(k)}}{\sqrt{|x|^2+|y|^2-2|x||y|\cos(k)}+2cL(1-|x|,1-|y|)}.    
\end{align}
By differentiation of the quotient \eqref{quo_phi1} with respect to $\cos(k)$ like in the proof of Theorem \ref{thm_trb}, we can see that it has only one possible stationary point, which is a minimum. Thus, its maximum is found at $\cos(k)=-1$ or at $\cos(k)=1$. Assume below that all the $\pm$ signs in the same formula mean a plus and all the $\mp$ signs mean a minus, or vice versa. The quotient \eqref{quo_phi1} becomes
\begin{align}\label{quo_y1}
\frac{1\mp|x||y|}{|x|\mp|y|+2cL(1-|x|,1-|y|)}\quad\text{at}\quad\cos(k)=\pm1.    
\end{align}
Denote $f(1-|y|)=2cL(1-|x|,1-|y|)$. By differentiation,
\begin{align*}
\frac{\partial}{\partial|y|}\left(\frac{1\mp|x||y|}{|x|\mp|y|+f(1-|y|)}\right)
=\frac{\pm(1-|x|^2)\mp|x|f(1-|y|)+(1\mp|x||y|)f'(1-|y|)}{(|x|\mp|y|+f(1-|y|))^2},\\
\frac{\partial}{\partial|y|}(\pm(1-|x|^2)\mp|x|f(1-|y|)+(1\mp|x||y|)f'(1-|y|))=-(1\pm|x||y|)f''(1-|y|).
\end{align*}
Here, $f''(1-|y|)$ is negative because the first derivative of $2cL(1-|x|,1-|y|)$ with respect to $|y|$ is decreasing on $(0,|x|)$. Thus, the derivative of the quotient \eqref{quo_y1} is increasing and the quotient \eqref{quo_y1} can only have a stationary point that is a minimum, regardless of whether the $\mp$ signs mean a plus or a minus. Consequently, the maximum value of the quotient \eqref{quo_y1} is obtained when either $|y|=0$ or $|y|=|x|$. This gives us three possible quotients:
\begin{align*}
\frac{1}{|x|+2cL(1-|x|,1)},\quad
\frac{1+|x|}{2c},\quad
\frac{1+|x|^2}{2(|x|+c(1-|x|))}.
\end{align*}
By differentiation with respect to $|x|$, we can prove that the possible maximum values of the three quotients above include 1, $1/(2c)$, and $1/c$.
\end{proof}

\section{Distortion under quasiregular mappings}
%%%%%%%%%%%%%%%%%%%%%%%%%%%%%%
%%%%%%%%%%%%%%%%%%%%%%%%%%%%%%
%%%%%%%%%%%%%%%%%%%%%%%%%%%%%%
Let us yet present one result about the distortion of the distances in the functions $\widetilde{\phi}^c_G$ and $\widetilde{\varphi}^c_G$ defined in \eqref{phi} and \eqref{varphi} under quasiregular mappings. See the definition of $K$-quasiregular mappings from \cite{hkv} or \cite{sch}. Define a number $\lambda_n$ by the formula \cite[(9.5) p. 157 \& (9.6), p. 158]{hkv} 
\begin{align*}
\log\lambda_n=\lim_{t\to\infty}((\gamma_n(t)\slash\omega_{n-1})^{1\slash(1-n)}-\log t),   
\end{align*}
where $\gamma_n$ is the Gr\"otzsch capacity as defined in \cite[(7.17), p. 121]{hkv} and $\omega_{n-1}$ is the $(n-1)$-dimensional surface area of the unit sphere. It follows that $4\leq\lambda_n<2e^{n-1}$ for each $n\geq2$ and $\lambda_2=4$.

\begin{theorem}\label{thm_sch}\cite[Thm 16.2, p. 300]{hkv}
If $f:G\to G'$ with $G,G'\in\{\uhp^n,\B^n\}$ is a non-constant $K$-quasiregular mapping and $\alpha=K^{1/(1-n)}$,
then for all $x,y\in G$,
\begin{align*}
&(1)\quad{\rm th}(\rho_{G'}(f(x),f(y))/2)
\leq\lambda_n^{1-\alpha}({\rm th}(\rho_G(x,y)/2))^\alpha,\\
&(2)\quad{\rm th}(\rho_{G'}(f(x),f(y))/2)
\leq K({\rm th}\rho_G(x,y)+\log4).
\end{align*}
\end{theorem}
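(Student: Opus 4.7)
The plan is to leverage the modulus of curve families (or ring domains) and its quasi-invariance under $K$-quasiregular mappings, which is the standard machinery behind Schwarz-type lemmas in higher dimensions. Note first that since $\uhp^n$ and $\B^n$ are conformally equivalent via a Möbius transformation that carries the hyperbolic metric to the hyperbolic metric and $K$-quasiregular maps to $K$-quasiregular maps, we may reduce to the case $G=G'=\B^n$ without loss of generality. Moreover, composing $f$ with suitable Möbius automorphisms of $\B^n$ on the source and target sides (these are hyperbolic isometries and $1$-quasiconformal, so they preserve both $\rho_{\B^n}$ and the quasiregularity class with the same $K$) allows us to normalize: place $x$ at the origin and arrange $f(x)=0$, reducing the problem to estimating $|f(y)|$ in terms of $|y|$ with $f(0)=0$.

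In this normalized setting, the key identity is that for $y\in\B^n$ one has $\th(\rho_{\B^n}(0,y)/2)=|y|$, so the target inequalities translate into bounds of the form $|f(y)|\le\lambda_n^{1-\alpha}|y|^\alpha$ (for (1)) and $|f(y)|\le K(|y|+\log 4)$-type estimate (for (2), after a bit of rearrangement). I would then invoke the quasiregular Schwarz lemma in its modulus form: the ring $\R^n\setminus(\{[0,y]\}\cup\{z:|z|\ge 1\})$ is a Grötzsch-type ring whose modulus (Grötzsch capacity $\gamma_n$) is controlled by $|y|$. Quasi-invariance of modulus under $K$-quasiregular maps gives $\mathrm{mod}(f\Gamma)\ge K^{-1}\mathrm{mod}(\Gamma)$ for the image ring, and translating back via $\gamma_n$ and its asymptotics yields a Hölder bound with exponent $\alpha=K^{1/(1-n)}$.

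For (1), the sharp constant $\lambda_n^{1-\alpha}$ emerges directly from the defining limit
\begin{align*}
\log\lambda_n=\lim_{t\to\infty}\bigl((\gamma_n(t)/\omega_{n-1})^{1/(1-n)}-\log t\bigr),
\end{align*}
because after inverting through $\gamma_n$ one picks up precisely this asymptotic prefactor. For (2), the linear-plus-$\log 4$ estimate is a Mori-type refinement: from the Hölder bound one extracts the Lipschitz behavior via the elementary inequality $t^\alpha\le\alpha t+(1-\alpha)$ (or a suitable concavity argument on $[0,1]$) combined with $\lambda_n\le 4e^{n-1}$ and the bound $1-\alpha\le (1-1/K)$, which when unpacked gives $K(\th\rho_G(x,y)+\log 4)$ as the cruder but additive estimate.

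The main obstacle is quantitative control of the Grötzsch capacity $\gamma_n$: everything hinges on knowing the precise asymptotic constant $\lambda_n$ and on the sharp form of the modulus distortion inequality $\mathrm{mod}(f\Gamma)\ge K^{-1}\mathrm{mod}(\Gamma)$. These are nontrivial results from the theory of conformal capacities and extremal length; in dimension $n=2$ one has the explicit $\lambda_2=4$, but for $n\ge 3$ one only has the two-sided estimate $4\le\lambda_n<2e^{n-1}$, so the Hölder constant in (1) is not fully explicit. The conversion from the Hölder bound (1) to the Lipschitz bound (2) is comparatively routine once (1) is in hand, requiring only elementary inequalities between powers and linear functions on $[0,1]$ together with the bound on $\lambda_n$.
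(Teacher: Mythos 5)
The first thing to note is that the paper contains no proof of this statement: Theorem \ref{thm_sch} is quoted directly from \cite[Thm 16.2, p. 300]{hkv}, so there is no in-paper argument to compare yours against. Your sketch does follow the same general route as the proof in that cited source (reduce to $\B^n$ by M\"obius maps, use $\th\left(\rho_{\B^n}(0,y)/2\right)=|y|$, compare Gr\"otzsch capacities, and invert $\gamma_n$ using the asymptotics that define $\lambda_n$), but two steps have genuine gaps. The modulus inequality you invoke, $\M(f\Gamma)\geq K^{-1}\M(\Gamma)$, is the $K_O$-type inequality, which is not available in this naive form for non-injective quasiregular mappings (image curves need not admit liftings with the required multiplicity control); the actual proof relies on Poletsky's/V\"ais\"al\"a's inequality, or equivalently the condenser inequality $\mathrm{cap}(fA,fC)\leq K_I(f)\,\mathrm{cap}(A,C)$, together with monotonicity of capacity and the Gr\"otzsch extremal property, to arrive at $\gamma_n(1/|f(y)|)\leq K\gamma_n(1/|y|)$. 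Moreover, passing from this to $|f(y)|\leq\lambda_n^{1-\alpha}|y|^{\alpha}$ is not a direct consequence of the limit defining $\lambda_n$: that limit is only an asymptotic statement, and upgrading it to a bound valid for all $0<|y|<1$ is precisely the content of the known functional estimates for the distortion function $\varphi_{K,n}$, which you would need to cite or prove.

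The treatment of part (2) is also off. As printed, (2) has $\th$ on the left-hand side and so is trivially true, since the left side is less than $1$ while $K\log 4>1$; the substantive inequality in \cite{hkv} is $\rho_{G'}(f(x),f(y))\leq K(\rho_G(x,y)+\log 4)$, and that version does not follow from (1) by the inequality $t^{\alpha}\leq\alpha t+(1-\alpha)$ alone. One must convert the bound on $\th(\rho_{G'}/2)$ into a bound on $\rho_{G'}$ itself, i.e.\ estimate $\mathrm{arth}$ of the right-hand side of (1), using the explicit bound on $\lambda_n$ (note the paper states $\lambda_n<2e^{n-1}$, not $4e^{n-1}$ as you wrote) and the relation $\alpha=K^{1/(1-n)}\geq 1/K$; this is a short but different computation from the one you describe. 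So the skeleton of your argument is the standard one, but as written it would not compile into a complete proof without Poletsky's inequality, the distortion-function estimates for $\varphi_{K,n}$, and a correct derivation of the additive form (2).
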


The Schwarz lemma can be written for several intrinsic metrics and quasi-metrics, as in the following theorem.

\begin{corollary}
If $f:\uhp^n\to\uhp^n$ is a non-constant $K$-quasiregular mapping and $\alpha=K^{1/(1-n)}$, then the inequalities 
\begin{align*}
&(1)\quad\widetilde{\phi}^c_{\uhp^n}(f(x),f(y))
\leq\lambda_n^{1-\alpha}\max\left\{1,\frac{1}{c}\right\}\left((1+c)\widetilde{\phi}^c_{\uhp^n}(x,y)\right)^\alpha,\\    
&(2)\quad\widetilde{\varphi}^c_{\uhp^n}(f(x),f(y))
\leq\lambda_n^{1-\alpha}\max\left\{1,\frac{1}{c}\right\}\left((1+c)\widetilde{\varphi}^c_{\uhp^n}(x,y)\right)^\alpha
\intertext{hold for all $x,y\in\uhp^n$. Furthermore, for a $K$-quasiregular mapping $f:\B^n\to\B^n$,}
&(3)\quad\widetilde{\phi}^c_{\B^n}(f(x),f(y))
\leq\lambda_n^{1-\alpha}\max\left\{1,\frac{1}{c}\right\}\left(\max\left\{2c,1+c\right\}\widetilde{\phi}^c_{\B^n}(x,y)\right)^\alpha,\\
&(4)\quad\widetilde{\varphi}^c_{\B^n}(f(x),f(y))\leq\lambda_n^{1-\alpha}\max\left\{1,\frac{1}{c}\right\}\left(\max\left\{2c,1+c\right\}\widetilde{\varphi}^c_{\B^n}(x,y)\right)^\alpha. \end{align*}
\end{corollary}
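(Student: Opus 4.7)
The plan is to sandwich Theorem \ref{thm_sch}(1) between the upper and lower halves of the sharp inequalities already proved in Theorems \ref{thm_trh}, \ref{thm_trb}, \ref{thm_hit}, and \ref{thm_hit1}. All four claims share the same three-step structure; only the constant appearing in the lower bound changes from case to case.

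For claim (1), I would first apply the upper half of Theorem \ref{thm_trh} to the image points $f(x), f(y) \in \uhp^n$, obtaining
\[
\widetilde{\phi}^c_{\uhp^n}(f(x), f(y)) \leq \max\left\{1, \tfrac{1}{c}\right\} \operatorname{th}\frac{\rho_{\uhp^n}(f(x), f(y))}{2}.
\]
Next, I would control the hyperbolic factor on the right by Theorem \ref{thm_sch}(1), yielding an upper bound of $\lambda_n^{1-\alpha}\bigl(\operatorname{th}(\rho_{\uhp^n}(x,y)/2)\bigr)^\alpha$. Finally, I would rearrange the lower half of Theorem \ref{thm_trh} as $\operatorname{th}(\rho_{\uhp^n}(x,y)/2) \leq (1+c)\,\widetilde{\phi}^c_{\uhp^n}(x,y)$ and raise to the power $\alpha > 0$ before substituting. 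Chaining the three inequalities gives exactly (1).

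Claim (2) follows from the same three-step chain verbatim, with $\widetilde{\phi}$ replaced by $\widetilde{\varphi}$ and Theorem \ref{thm_hit} used in place of Theorem \ref{thm_trh}; the upper and lower constants coincide with those of (1), so the statement of (2) is formally identical. For claims (3) and (4), I would run the same argument in the unit ball, invoking Theorem \ref{thm_trb} and Theorem \ref{thm_hit1} respectively. The upper-bound constant is again $\max\{1,1/c\}$, while the lower bound now reads $\min\{1/(2c), 1/(1+c)\}\operatorname{th}(\rho_{\B^n}(x,y)/2) \leq \widetilde{\phi}^c_{\B^n}(x,y)$ (and likewise for $\widetilde{\varphi}^c_{\B^n}$); taking reciprocals produces the factor $\max\{2c, 1+c\}$ inside the parenthesised base of the $\alpha$-th power.

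There is no real obstacle here: each of the four statements reduces mechanically to a three-fold composition of results already in place. The only bookkeeping point is to track which factor receives the exponent $\alpha$ and which stays outside. Specifically, the external constants $\lambda_n^{1-\alpha}$ and $\max\{1, 1/c\}$ are produced by steps that occur \emph{before} raising to the $\alpha$-th power, whereas the lower-bound reversal supplying $(1+c)$ or $\max\{2c, 1+c\}$ occurs \emph{after}; this matches exactly the grouping displayed in the statement of the corollary.
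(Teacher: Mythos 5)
Your proposal is correct and is exactly the argument the paper intends: the paper's proof is a one-line citation of Theorems \ref{thm_trh}, \ref{thm_trb}, \ref{thm_hit}, \ref{thm_hit1}, and \ref{thm_sch}, and your three-step chain (upper comparison bound at the image points, Schwarz lemma for the hyperbolic factor, reversed lower comparison bound raised to the power $\alpha$) is precisely how those results combine. The bookkeeping of which constants stay outside the $\alpha$-th power also matches the stated inequalities.
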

\begin{proof}
Follows from Theorems \ref{thm_trh}, \ref{thm_trb}, \ref{thm_hit}, \ref{thm_hit1}, and \ref{thm_sch}.     
\end{proof}

%%%%%%%%%%%%%%%%%%%%%%%%%%%%%%
%%%%%%%%%%%%%%%%%%%%%%%%%%%%%%
%%%%%%%%%%%%%%%%%%%%%%%%%%%%%%

\def\cprime{$'$} \def\cprime{$'$} \def\cprime{$'$}
\providecommand{\bysame}{\leavevmode\hbox to3em{\hrulefill}\thinspace}
\providecommand{\MR}{\relax\ifhmode\unskip\space\fi MR }
% \MRhref is called by the amsart/book/proc definition of \MR.
\providecommand{\MRhref}[2]{%
  \href{http://www.ams.org/mathscinet-getitem?mr=#1}{#2}
}
\providecommand{\href}[2]{#2}

%========================================================================

\end{document}